\theoremstyle{plain}
\newtheorem{thm}{Theorem}[section]
\newtheorem{lem}[thm]{Lemma}
\newtheorem{rem}[thm]{Remark}
\newtheorem{definition}[thm]{Definition}
\newtheorem{corollary}[thm]{Corollary}
\newtheorem{exa}[thm]{Example}
\numberwithin{equation}{section}
\newcommand{\be}%
  {\protect\setcounter{equation}{\value{subsubsection}}}  
\newcommand{\ee}%
  {\protect\setcounter{subsubsection}{\value{equation}}}
\begin{document}

\title{HYPERGEOMETRIC FUNCTIONS AND ALGEBRAIC CURVES $y^e=x^d+ax+b$ }


\author{Kewat, Pramod Kumar \&  Kumar, Ram}

\address{Department of Applied Mathematics\\ Indian Institute of Technology (ISM), Dhanbad-826004\\ Jharkhand, India}
\email{pramodk@iitism.ac.in, ramkumarbhu1991@gmail.com}

\begin{abstract}
Let $q$ be a prime power and $\mathbb{F}_q$ be a finite field with $q$ elements. Let $e$ and $d$ be positive integers.
In this paper, for $d\geq2$ and $q\equiv1(\mathrm{mod}~ed(d-1))$,  we calculate the number of points on an algebraic curve 
$E_{e,d}:y^e=x^d+ax+b$ over a finite field $\mathbb{F}_q$ in terms of $_dF_{d-1}$
Gaussian hypergeometric series with multiplicative characters of orders $d$ and $e(d-1)$, and in terms of
$_{d-1}F_{d-2}$ Gaussian hypergeometric series with multiplicative characters of orders $ed(d-1)$ and $e(d-1)$.
This helps us to express the trace of Frobenius endomorphism of an algebraic curve $E_{e,d}$ over a finite field
$\mathbb{F}_q$ in terms of the above hypergeometric series. As applications, we obtain some transformations and special values of  $_2F_{1}$ Gaussian hypergeometric series.

\end{abstract}
\subjclass{11G20, 11T24}
\keywords{Algebraic curves, Gaussian hypergeometric series, Frobenius endomorphisms, Characters of finite fields}



\maketitle

\markboth{P. K. Kewat and R. Kumar}{Hypergeometric functions and algebraic curves}
\section{Introduction}In the $19^{th}$ century Gauss introduced classical hypergeometric series. Since then, many
mathematicians such as Kummer, Ramanujan, Beukers, Stiller and others studied classical hypergeometric series extensively and found many interesting connections between classical hypergeometric series and different mathematical objects.
In 1980's, Greene \cite{greene1987hypergeometric} introduced hypergeometric functions (or Gaussian hypergeometric series)
over finite fields analogous to classical hypergeometric series as finite character sums over  a finite field.
It is found that these functions satisfy many summation and transformation formulas analogous to classical hypergeometric series.
In  a series of papers, many interesting relations have been established between special values of these hypergeometric functions 
and the number of points on certain algebraic curves over finite fields
(see, for examples, \cite{barman2012certain, barman2012hypergeometric, barman2013gautam, Barman13, barmanhyperelliptic,
fuselier2007hypergeometric, koike1995orthogonal, kalita2007, lennon2011, ono1998values}).
\par
Fuselier \cite{fuselier2007hypergeometric} and Lennon \cite{lennon2011} found formulas for the trace of Frobenius endomorphism of a
certain family of elliptic curves in terms of Gaussian hypergeometric series containing characters of order 12. In \cite{Barman13}, Barman and Kalita found the number of solutions of the polynomial equation $x^d+ax+b=0$ over a finite field $\mathbb{F}_q$ in terms of special values of Gaussian hypergeometric series with characters of orders $d$ and $d-1$ under the condition that $q \equiv 1 (\text{mod}~ d(d - 1))$ and $d \geq 2$. The same authors, in \cite{barmanhyperelliptic}, expressed the number of $\mathbb{F}_q$-points on a hyperelliptic curve
in terms of special values of Gaussian hypergeometric series.
\par
Let $e$ and $d$ be positive integers and $E_{e,d}$: $y^e=x^d+ax+b$ be an algebraic curve over a finite field $\mathbb{F}_q$. 
Throughout this paper, we assume that $a,b\neq0$. Let $N_{e,d}$ denotes the number of points on the algebraic curve $E_{e,d}$ over
$\mathbb{F}_q$ excluding points at infinity and $a_q(E_{e,d})$ denotes the trace of Frobenius of the algebraic curve $E_{e,d}$. 
In this paper, for $d \geq 2$ and $q\equiv1(\mathrm{mod}~ed(d-1))$, we express $N_{e,d}$ and $a_q(E_{e,d})$ in terms of $_dF_{d-1}$
and $_{d-1}F_{d-2}$  Gaussian hypergeometric series containing multiplicative characters of orders $d$, $e(d-1)$ and $ed(d-1)$.
We deduce the result of Lennon \cite{lennon2011} on the trace of Frobenius of an elliptic curve from our main results.
In \cite{ono1998values}, Ono obtained special values of hypergeometric functions containing quadratic and trivial characters. Only a few
such values are known for higher order characters. In the last section, we derive some interesting special values of $_2F_1$ hypergeometric
function containing multiplicative characters of order $12$.

\section{Preliminaries}
\label{}
Let $\mathbb F_q$ be a finite field with $q$ elements, where $q=p^n, p$ is a prime number and $n$ is a positive integer.
Note that $\mathbb F^*_q=\mathbb F_q\backslash \{0\}$ is a cyclic multiplicative group of order $q-1.$
A multiplicative character $\chi\colon\mathbb F^*_q\longrightarrow\mathbb C^* $ is a group homomorphism. Throughout, 
we reserve the notations $\varepsilon$ and $\phi$ for the trivial and the quadratic characters,
respectively. Thus, for $x\in\mathbb{F}^*_q$\\
\begin{eqnarray}
\phi(x)=\left(\frac{x}{q}\right)=
 \begin{cases}
1,\mbox{  ~if $x$ is a square of some element in $\mathbb{F}^*_q$,}\\
-1,\mbox{if $x$ is not a square of any element in $\mathbb{F}^*_q$}
\end{cases}
\end{eqnarray}
is the Legendre symbol. The following theorem gives the structure of multiplicative characters on $\mathbb{F}^*_q$.
\begin{thm}\cite{lidl1997finite}
Let $g$ be a generator of the multiplicative group $\mathbb F^*_q$. For $j=0,1,2,\cdots,q-2$, the functions \\
 \begin{eqnarray*}
 \chi_j (g^k)=e^\frac{2{\pi}ijk}{q-1}, ~\text{for}~ k=0,1,2,\cdots,q-2,
 \end{eqnarray*}
 define multiplicative characters on $\mathbb{F}^*_q $.
\end{thm}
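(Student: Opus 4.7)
The plan is to exploit the cyclic structure of $\mathbb{F}_q^*$. Since $g$ generates $\mathbb{F}_q^*$, a group of order $q-1$, every element of $\mathbb{F}_q^*$ admits a unique representation as $g^k$ with $0 \leq k \leq q-2$. This immediately shows that each $\chi_j$ is well-defined as a function $\mathbb{F}_q^* \to \mathbb{C}^*$, and its values are $(q-1)$-th roots of unity, hence lie in $\mathbb{C}^*$.

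The second step is to verify the homomorphism property $\chi_j(xy) = \chi_j(x)\chi_j(y)$ for all $x,y \in \mathbb{F}_q^*$. Writing $x = g^k$ and $y = g^l$ with $0 \leq k,l \leq q-2$, the product satisfies $xy = g^{k+l} = g^{m}$, where $m = (k+l) \bmod (q-1)$ since $g^{q-1} = 1$. The key computation is that $\chi_j(g^m) = e^{2\pi i j m/(q-1)}$ coincides with $e^{2\pi i j(k+l)/(q-1)}$ because $e^{2\pi i j(q-1)/(q-1)} = e^{2\pi i j} = 1$ for every integer $j$; the latter factors as $e^{2\pi i jk/(q-1)} \cdot e^{2\pi i jl/(q-1)} = \chi_j(g^k)\chi_j(g^l)$, which establishes the multiplicativity.

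The only subtle point, and therefore the main (still minor) obstacle, is precisely this well-definedness modulo $q-1$: one must confirm that shifting the exponent $k$ by any multiple of $q-1$ leaves $\chi_j(g^k)$ invariant, a fact that hinges on $j$ being an integer. Once that is in hand, the assertion that $\chi_j$ is a multiplicative character follows with no further work. As an optional remark, one may observe that the $q-1$ characters $\chi_0, \chi_1, \ldots, \chi_{q-2}$ are pairwise distinct, since their values $e^{2\pi i j/(q-1)}$ at $g$ are the $q-1$ distinct $(q-1)$-th roots of unity; combined with the fact that the character group of a finite abelian group of order $q-1$ has exactly $q-1$ elements, this would show the listed characters exhaust all multiplicative characters on $\mathbb{F}_q^*$, although the theorem as stated does not claim exhaustiveness.
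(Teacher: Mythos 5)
Your proof is correct: the well-definedness of $\chi_j$ modulo $q-1$ (which rests on $j$ being an integer, so that $e^{2\pi i j}=1$) and the resulting homomorphism property are exactly the two points that need checking, and you handle both. The paper itself offers no proof to compare against --- this theorem is quoted from the literature as background in the Preliminaries section --- but your argument is the standard one and is complete; the closing remark that the $\chi_j$ are pairwise distinct and exhaust $\widehat{\mathbb{F}_q^*}$ is a correct bonus beyond what the statement asks.
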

The set $ \widehat{\mathbb F_q^*}$ of all multiplicative characters on $\mathbb F_q^*$ is a cyclic group under
multiplication of characters.
One extends the domain of a multiplicative character $\chi$ on $\mathbb F_q^*$ to $\mathbb F_q$ by defining $\chi(0)=0$.\\
Define the additive character $ \theta: \mathbb{F}_{q}\rightarrow \mathbb{C}^* $ by $\theta(\alpha)=\zeta^{tr{(\alpha)}}$,
where $\zeta=e^{\frac{2\pi i}{p}}$ and $tr:\mathbb{F}_{q}\rightarrow \mathbb{F}_{p}$ is the trace map given by \\
\begin{center}
$tr(\alpha)=\alpha +\alpha^{p} +\alpha^{p{^2}} +\cdots +\alpha^{p^{n-1}}.$
\end{center}
Throughout this paper, by capital letters $A, B, C, \ldots$ and Greek letters $\chi, \psi, \ldots$, we will denote multiplicative characters.
Let $\delta$ denote both the function on $\mathbb F_q$ and the function on $ \widehat{\mathbb F_q^*}$:
 \begin{eqnarray}
 \delta(x)=
\begin{cases}
 1 \mbox{\quad if $x=0$};\\
 0 \mbox{\quad if $x\neq 0$},
\end{cases}
\end{eqnarray}
and
\begin{equation}
 \delta(A)=
 \begin{cases}
 1 \mbox{\quad if $A$ is trivial character},\\
 0 \mbox{ \quad otherwise}.
\end{cases}
\end{equation}
Define $\overline{\chi}$ by $\chi\overline{\chi}=\varepsilon$. We write $\sum\limits_{x}$ to denote the sum over all $x$ in 
$\mathbb F_q$ and $\sum\limits_{\chi}$ to denote the sum over all characters of $\mathbb{F}_q$.\\
We recall the definitions of the Jacobi sum
\begin{equation}
 J(A,B)=\sum\limits_{x}{A(x)B(1-x)}
\end{equation}
and the Gauss sum 
\begin{equation}
G(A)=:\sum\limits_{x}{A(x)\zeta^{tr{(x)}}}.
\end{equation}
\begin{definition}\cite{lidl1997finite}
For characters A and B of $\mathbb{F}_q$, the binomial coefficient $\binom{A}{B}$ is defined as\\
\begin{eqnarray*}
\binom{A}{B}=\frac{B(-1)}{q}J(A,\overline{B}).
\end{eqnarray*}
\end{definition}
In terms of the binomial coefficients, $A(1+x)$ can be written as
\begin{eqnarray}
A(1+x)=\delta{(x)}+\frac{q}{q-1}\sum\limits_\chi\binom{A}{\chi}\chi(x).
\end{eqnarray}
Some useful properties of the binomial coefficients which follow easily from properties of the Jacobi sums are
\begin{equation}
\binom{A}{B}=\binom{A}{A\overline{B}}, 
\end{equation}
\begin{equation}
\binom{A}{B}=\binom{B\overline{A}}{B}B(-1) 
\end{equation}
and
\begin{equation}
\binom{A}{B}=\binom{\overline{B}}{\overline{A}}AB(-1). 
\end{equation}
Let $T$ be a fixed generator of $ \widehat{\mathbb F_q^*}$ and $G_{m}=G(T^m).$
The following lemma gives the beautiful properties of the Gauss sum. 
\begin{lem}\cite{fuselier2007hypergeometric}\label{l2.3}
If $T^m$ is not trivial for $m\in\mathbb{N},$ then we have\\
\begin{align*}
G_mG_{-m}=qT^m(-1).
\end{align*}
\end{lem}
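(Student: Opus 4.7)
The plan is to unfold both Gauss sums into character sums over $\mathbb{F}_q$ and then collapse the double sum by a change of variables that exploits the fact that $T^m$ and $T^{-m}$ are inverse characters. Since $T^m$ is nontrivial, $T^m(0)=0$, so I can work over $\mathbb{F}_q^*$ in each factor. Writing
\begin{equation*}
G_m G_{-m} \;=\; \sum_{x \neq 0}\sum_{y \neq 0} T^m(x)\,T^{-m}(y)\,\zeta^{tr(x+y)},
\end{equation*}
I would substitute $y = xz$ with $z \in \mathbb{F}_q^*$, which turns the product of characters into $T^m(x)T^{-m}(x)T^{-m}(z) = T^{-m}(z)$ because $T^m\overline{T^m} = \varepsilon$ and $x \neq 0$.

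The double sum then factorizes into
\begin{equation*}
G_m G_{-m} \;=\; \sum_{z \neq 0} T^{-m}(z) \sum_{x \neq 0} \zeta^{tr(x(1+z))}.
\end{equation*}
The next step is to evaluate the inner additive character sum by cases. When $z = -1$ the inner sum is $q-1$; when $z \neq -1$ a change of variable $u = x(1+z)$ reduces it to $\sum_{u \neq 0}\zeta^{tr(u)} = -1$, using that the full sum of the additive character over $\mathbb{F}_q$ vanishes.

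Splitting the outer sum accordingly gives
\begin{equation*}
G_m G_{-m} \;=\; (q-1)\,T^{-m}(-1) \;-\; \sum_{\substack{z \neq 0 \\ z \neq -1}} T^{-m}(z).
\end{equation*}
Now I would invoke orthogonality of characters on $\mathbb{F}_q^*$: since $T^{-m}$ is nontrivial, $\sum_{z \neq 0} T^{-m}(z) = 0$, so the remaining sum equals $-T^{-m}(-1)$. Substituting this back and observing that $T^m(-1) = T^{-m}(-1)$ (because $T^m(-1)^2 = T^m(1) = 1$) yields $G_m G_{-m} = q\,T^m(-1)$, as claimed.

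There is no real obstacle here; the argument is a standard manipulation. The only place to be careful is in the change of variable $y = xz$: I need the restriction $x \neq 0$ to make $z = y/x$ a genuine bijection between pairs $(x,y)$ with $x,y \neq 0$ and pairs $(x,z)$ with $x,z \neq 0$, and this is exactly what the nontriviality of $T^m$ permits by killing the $x = 0$ contribution at the outset.
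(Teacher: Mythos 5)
Your argument is correct and complete: the substitution $y=xz$, the case split on $z=-1$ versus $z\neq-1$, and the orthogonality relation for the nontrivial character $T^{-m}$ together give $G_mG_{-m}=qT^{-m}(-1)=qT^m(-1)$, and you rightly use nontriviality of $T^m$ both to drop the $x=0$ term and to kill $\sum_{z\neq 0}T^{-m}(z)$. The paper itself offers no proof of this lemma (it is quoted from Fuselier), and your derivation is the standard one found in the cited sources, so there is nothing to reconcile.
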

The following lemmas give the nice relationship between the Gauss sum and the Jacobi sum.
\begin{lem}\cite{lennon2011}\label{l2.4}
If $T^{m-n}$ is not trivial for $m,n\in\mathbb{N},$ then we have
\begin{align*}
G_mG_{-n}=&q\binom{T^m}{T^n}G_{m-n}T^n(-1)\\
=&J(T^m,T^{-n})G_{m-n}.
\end{align*}
\end{lem}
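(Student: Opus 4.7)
The plan is to establish the more fundamental identity
\begin{equation*}
G(A)\,G(B) = J(A,B)\,G(AB)
\end{equation*}
whenever $AB$ is a nontrivial character, and then specialize it to $A = T^m$ and $B = T^{-n}$, noting that $AB = T^{m-n}$ is nontrivial by hypothesis. Starting from the definition,
\begin{equation*}
G(A)\,G(B) = \sum_{x}\sum_{y} A(x)\,B(y)\,\zeta^{tr(x+y)},
\end{equation*}
I would re-index by the value $t = x+y$, splitting the resulting double sum into the cases $t = 0$ and $t \neq 0$. For $t \neq 0$, I would substitute $x = ts$ (so that $y = t(1-s)$), pull out the common factor $A(t)B(t) = (AB)(t)$, and recognize the inner $s$-sum as exactly $J(A,B)$; the $t$-sum that remains is $\sum_{t \neq 0}(AB)(t)\,\zeta^{tr(t)} = G(AB)$, yielding $J(A,B)\,G(AB)$. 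The case $t = 0$ (equivalently $y = -x$) contributes $B(-1)\sum_{x}(AB)(x)$, which vanishes precisely because $AB$ is nontrivial.

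Applied with $A = T^m$ and $B = T^{-n}$, this directly gives the second stated equality $G_m G_{-n} = J(T^m, T^{-n})\,G_{m-n}$. For the first equality, I would unwind the definition of the binomial coefficient,
\begin{equation*}
\binom{T^m}{T^n} = \frac{T^n(-1)}{q}\,J(T^m,\overline{T^n}) = \frac{T^n(-1)}{q}\,J(T^m, T^{-n}),
\end{equation*}
and solve for the Jacobi sum using $T^n(-1)^2 = T^{2n}(1) = 1$, which gives $T^n(-1)^{-1} = T^n(-1)$ and hence $J(T^m, T^{-n}) = q\,T^n(-1)\,\binom{T^m}{T^n}$. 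Multiplying by $G_{m-n}$ converts the second formula into the first.

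The only delicate point is the $t = 0$ case: one must verify that this degenerate contribution genuinely vanishes, and the nontriviality of $T^{m-n} = AB$ assumed in the lemma is exactly what is needed for the orthogonality of characters to kill it. Everything else is a straightforward change of variables in the Gauss-sum double sum together with a direct appeal to the definition of the binomial coefficient $\binom{\cdot}{\cdot}$ in terms of $J$.
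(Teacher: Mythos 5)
Your proof is correct. The paper itself gives no proof of this lemma --- it is imported from Lennon's work as a cited result --- but your argument is the standard one: the factorization $G(A)G(B)=J(A,B)G(AB)$ via the substitution $x=ts$ for $t=x+y\neq 0$, the observation that the $t=0$ term $B(-1)\sum_x (AB)(x)$ dies by orthogonality exactly because $T^{m-n}=AB$ is nontrivial, and the translation between $J(T^m,T^{-n})$ and $q\,T^n(-1)\binom{T^m}{T^n}$ using $T^n(-1)^{-1}=T^n(-1)$ are all handled correctly and in the right level of detail.
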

\begin{lem}\cite{lidl1997finite}\label{GJ}
 If $T^{k_1}, T^{k_2}, \dots, T^{k_n}$ are nontrivial multiplicative characters of $\mathbb{F}_q$ and $T^{k_1+k_2+\cdots+k_n}$ is nontrivial, then
\begin{align*}
J(T^{k_1}, T^{k_2}, \dots, T^{k_n})=\frac{G_{k_1}G_{k_2}\dots G_{k_n}}{G_{k_1+k_2+ \dots+ k_n}}.
\end{align*}
\end{lem}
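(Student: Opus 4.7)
The plan is to first establish the Gauss-sum product identity
\begin{align*}
G_{k_1}G_{k_2}\cdots G_{k_n} = J(T^{k_1},T^{k_2},\ldots,T^{k_n})\,G_{k_1+k_2+\cdots+k_n}
\end{align*}
and then divide by $G_{k_1+\cdots+k_n}$, which is nonzero by Lemma~\ref{l2.3} under the hypothesis that $T^{k_1+\cdots+k_n}$ is nontrivial. To produce this product identity, I would expand
\begin{align*}
\prod_{i=1}^n G_{k_i} = \sum_{t_1,\ldots,t_n\in\mathbb{F}_q} T^{k_1}(t_1)\cdots T^{k_n}(t_n)\,\theta(t_1+\cdots+t_n),
\end{align*}
introduce $s:=t_1+\cdots+t_n$ as an organizing variable, and split the sum into contributions from $s\neq 0$ and $s=0$.

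For the $s\neq 0$ piece, I would change variables via $t_i=s\,u_i$, so that the constraint becomes $u_1+\cdots+u_n=1$. Since each $T^{k_i}$ is multiplicative, the summand factors as $T^{k_1+\cdots+k_n}(s)\prod_i T^{k_i}(u_i)$; the inner sum over the affine hyperplane is precisely $J(T^{k_1},\ldots,T^{k_n})$, while the remaining outer sum $\sum_{s\neq 0} T^{k_1+\cdots+k_n}(s)\theta(s)$ recovers $G_{k_1+\cdots+k_n}$ (extending to $s=0$ costs nothing since $T^{k_1+\cdots+k_n}$ is nontrivial and thus vanishes at $0$). This yields the expected main term.

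The main obstacle is handling the $s=0$ slice
\begin{align*}
S_0 := \sum_{t_1+\cdots+t_n=0} T^{k_1}(t_1)\cdots T^{k_n}(t_n),
\end{align*}
which I need to show vanishes. My plan is to exploit homogeneity: for every $c\in\mathbb{F}_q^*$, the scaling $(t_1,\ldots,t_n)\mapsto(ct_1,\ldots,ct_n)$ is a bijection of the hyperplane $\{t_1+\cdots+t_n=0\}$ and multiplies each summand by $T^{k_1+\cdots+k_n}(c)$. Hence $S_0=T^{k_1+\cdots+k_n}(c)\,S_0$ for every $c\neq 0$; because $T^{k_1+\cdots+k_n}$ is nontrivial, some $c$ satisfies $T^{k_1+\cdots+k_n}(c)\neq 1$, which forces $S_0=0$. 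Combining the two slices gives the product identity, and dividing by $G_{k_1+\cdots+k_n}$ completes the proof. Note that the hypothesis that each individual $T^{k_i}$ is nontrivial is not needed for the vanishing argument, but it is implicitly used so that the $G_{k_i}$ on the left-hand side are the honest Gauss sums appearing in Lemma~\ref{GJ}.
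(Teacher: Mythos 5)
Your argument is correct. The paper does not prove this lemma at all---it is quoted from the reference \cite{lidl1997finite} (Lidl--Niederreiter)---so there is no in-paper proof to compare against; what you have written is the standard textbook derivation. Each step checks out: the expansion of $\prod_i G_{k_i}$ as a sum over $(t_1,\dots,t_n)$, the fibering over $s=t_1+\cdots+t_n$, the substitution $t_i=su_i$ on the fibers with $s\neq 0$ (which uses multiplicativity and the bijectivity of scaling), the recovery of $G_{k_1+\cdots+k_n}$ from the outer sum because $T^{k_1+\cdots+k_n}(0)=0$, and the vanishing of the $s=0$ slice by the homogeneity/nontriviality argument. Dividing by $G_{k_1+\cdots+k_n}$ is legitimate since Lemma~\ref{l2.3} gives $G_mG_{-m}=qT^m(-1)\neq 0$ for nontrivial $T^m$. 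The only thing worth noting is that the paper defines only the two-variable Jacobi sum $J(A,B)=\sum_x A(x)B(1-x)$, so you are implicitly supplying the $n$-variable definition $J(T^{k_1},\dots,T^{k_n})=\sum_{t_1+\cdots+t_n=1}\prod_i T^{k_i}(t_i)$; that is indeed the definition intended by the cited source, so this is a presentational gap rather than a mathematical one.
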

\begin{lem}\cite{fuselier2007hypergeometric}\label{l2.6}
If $T$ is a fixed generator of $\mathbb{\widehat{F}}_{q}^*$ and $n\in\mathbb{N}$, then orthogonality 
relations for multiplicative characters are given by
\begin{enumerate}
 \item $\sum_{x\in\mathbb{F}_q^*}T^n(x)=$ $\begin{cases} q-1 \quad \mbox{if}~ T^n=\varepsilon;\\
0 \quad \quad\quad\mbox{if}~ T^n\neq\varepsilon. \end{cases}$\\
\item $\sum_{n=0}^{q-2}T^n(x)=$
$\begin{cases}
q-1~ \mbox{if}~ x=1;\\
0\quad\quad\mbox{if}~ x\neq1.
\end{cases}$
\end{enumerate}
\end{lem}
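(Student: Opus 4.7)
The plan is to prove both orthogonality relations by the standard ``translation trick'' for characters of a finite abelian group: if a sum over a cyclic group is invariant under shifting by a nontrivial element (or a nontrivial character value), then the sum must vanish. Both parts will follow from this observation together with an easy split into trivial and nontrivial cases.

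For part (1), the case $T^n=\epsilon$ is immediate since every summand equals $1$, giving $|\mathbb{F}_q^*|=q-1$. In the nontrivial case $T^n\neq\epsilon$, I would choose $y\in\mathbb{F}_q^*$ with $T^n(y)\neq 1$, which exists precisely because $T^n$ is nontrivial, and then observe that
\begin{align*}
T^n(y)\sum_{x\in\mathbb{F}_q^*}T^n(x)=\sum_{x\in\mathbb{F}_q^*}T^n(yx)=\sum_{x\in\mathbb{F}_q^*}T^n(x),
\end{align*}
since multiplication by $y$ permutes $\mathbb{F}_q^*$. Writing the sum as $S$, this gives $(T^n(y)-1)S=0$, hence $S=0$.

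For part (2), the case $x=1$ is trivial for the same reason. For $x\neq1$, I would use that $T$ is a \emph{generator} of $\widehat{\mathbb{F}_q^*}$ in order to conclude $T(x)\neq1$: writing $x=g^k$ for a generator $g$ of $\mathbb{F}_q^*$ with $1\le k\le q-2$, Theorem 2.1 gives $T(x)=e^{2\pi i k/(q-1)}$, which equals $1$ iff $x=1$. With this in hand, the dual shift argument gives
\begin{align*}
T(x)\sum_{n=0}^{q-2}T^n(x)=\sum_{n=0}^{q-2}T^{n+1}(x)=\sum_{n=0}^{q-2}T^n(x),
\end{align*}
where the last equality re-indexes and uses $T^{q-1}=\epsilon$. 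Hence the sum is $0$.

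Since this is a classical orthogonality result, there is no genuine obstacle; the only piece requiring care is the faithful-pairing fact $T(x)\neq1$ for $x\neq1$, which rests squarely on $T$ being a generator. An alternative route for part (2) would be to recognize $\sum_{n=0}^{q-2}T(x)^n$ as a finite geometric progression and evaluate it as $(T(x)^{q-1}-1)/(T(x)-1)=0$, but the translation-invariance approach has the advantage of being uniform with part (1).
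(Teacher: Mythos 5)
Your proof is correct and complete. The paper itself gives no proof of this lemma---it is simply quoted from the cited reference---so there is nothing to compare against; your translation-invariance argument is the standard one, and the only point that genuinely needs care, namely that $T(x)\neq 1$ for $x\neq 1$ because $T$ generates $\widehat{\mathbb{F}_q^*}$ (equivalently, $T(g)$ is a primitive $(q-1)$th root of unity for a generator $g$ of $\mathbb{F}_q^*$), is handled properly in part (2).
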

The following lemma gives the relation between additive characters and the Gauss sums.
\begin{lem}\cite{fuselier2007hypergeometric}\label{l2.5}
Let $\theta$ be an additive character and $\alpha\in\mathbb{F}^*_q$. We have 
\begin{align*}
\theta(\alpha)=\frac{1}{q-1}\sum_{m=0}^{q-2}{G_{-m}T^m(\alpha)}. 
\end{align*}
\end{lem}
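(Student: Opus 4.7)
The identity is a Fourier-inversion relation on the cyclic group $\mathbb{F}_q^*$, in which the Gauss sums $G_m$ act as the Fourier coefficients of $\theta$ with respect to the orthonormal basis $\{T^m\}_{m=0}^{q-2}$. The strategy is a direct one: expand the Gauss sums on the right, swap the two finite summations, and invoke the orthogonality relation of Lemma \ref{l2.6}(2) to collapse the character sum down to a single point.

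Concretely, I would begin by substituting $G_m = \sum_{y} T^m(y)\theta(y)$ into the right-hand side and interchanging the order of summation to obtain
\begin{equation*}
\frac{1}{q-1}\sum_{m=0}^{q-2} G_m\, T^m(\alpha)
= \frac{1}{q-1}\sum_{y}\theta(y)\sum_{m=0}^{q-2}T^m(\alpha y),
\end{equation*}
where the convention $T^m(0)=0$ kills the $y=0$ contribution automatically. Next I would apply Lemma \ref{l2.6}(2) with $x = \alpha y$: the inner sum equals $q-1$ precisely when $\alpha y = 1$ and vanishes otherwise. This collapses the outer sum to the single surviving term at $y = \alpha^{-1}$, and after the $1/(q-1)$ normalization cancels one obtains $\theta(\alpha^{-1})$. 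A harmless reindexing $m \mapsto q-1-m$ in the summation range (valid because $T^{q-1}=\epsilon$) reconciles the $T^m$ and $T^{-m}$ conventions and produces the statement as given.

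There is essentially no genuine obstacle; the only care point is bookkeeping the extension of characters by $0$ so that the sum over $y\in\mathbb{F}_q$ effectively reduces to a sum over $\mathbb{F}_q^*$, which is exactly what makes the orthogonality step apply cleanly. Every ingredient — the definition of $G_m$ and the two-part orthogonality statement — is already available in the preliminaries, and the entire argument reduces to an application of the second orthogonality identity after one interchange of summation.
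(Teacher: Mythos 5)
Your expansion of $G_m=\sum_y T^m(y)\theta(y)$, the interchange of summations, and the appeal to Lemma \ref{l2.6}(2) are all fine, and they correctly show that
\begin{align*}
\frac{1}{q-1}\sum_{m=0}^{q-2}G_m T^m(\alpha)=\frac{1}{q-1}\sum_{y}\theta(y)\sum_{m=0}^{q-2}T^m(\alpha y)=\theta(\alpha^{-1}).
\end{align*}
The gap is in your last step. The substitution $m\mapsto q-1-m$ merely permutes the terms of the sum $\sum_{m=0}^{q-2}G_mT^m(\alpha)$; it is an identity transformation and cannot change the value of that sum from $\theta(\alpha^{-1})$ to $\theta(\alpha)$. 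Since $\theta(\alpha^{-1})\neq\theta(\alpha)$ in general (e.g.\ $q=p=5$, $\alpha=2$, where $\theta(2)=\zeta^2\neq\zeta^3=\theta(3)$), no reindexing can ``reconcile the conventions,'' and your argument as written proves a statement different from the one displayed.

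What your computation actually reveals is that the lemma as printed is off by a sign in the Gauss-sum index: the correct identity (and the one in Fuselier's paper, and the one this paper actually uses later --- note the $G_{-l}$, $G_{-m}$, $G_{-n}$, $G_{-k}$ appearing in the evaluations of $A$, $B$, $C$, $D$) is
\begin{align*}
\theta(\alpha)=\frac{1}{q-1}\sum_{m=0}^{q-2}G_{-m}T^m(\alpha).
\end{align*}
Your method proves this corrected version verbatim: expanding $G_{-m}=\sum_yT^{-m}(y)\theta(y)$ and swapping sums, the inner character sum is $\sum_mT^m(\alpha y^{-1})$, which by orthogonality survives only at $y=\alpha$, giving $\theta(\alpha)$. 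So the fix is simply to prove that statement directly (the surviving term is at $y=\alpha$ rather than $y=\alpha^{-1}$) and to note the typographical sign error in the lemma as quoted, rather than to invoke a reindexing that does not do what you claim.
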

\begin{thm}[Davenport-Hasse Relation \cite{slangcyclotomicfields2012}] Let $m\in\mathbb{Z}^+$ and
 $q\equiv1(\mathrm{mod}~m).$ For multiplicative characters $\chi,\psi\in\mathbb{\widehat{F}}_q^*$, we have
\begin{align*}
\prod_{\chi^m=\varepsilon}{G(\chi\psi)}=-G(\psi^m)\psi(m^{-m})\prod_{\chi^m=\varepsilon}{G(\chi)}. 
\end{align*}
We have the following two special cases of the above theorem.
\begin{corollary}\cite{Barman13}
Let $d$ be a positive integer, $l\in\mathbb{Z}, q\equiv1(\mathrm{mod~d})$ and $t\in\{1,-1\}$.
If $d>1$ is an odd integer, then
\begin{center}
$G_lG_{l+t\frac{q-1}{d}}G_{l+t\frac{2(q-1)}{d}}\cdots G_{l+t\frac{(d-1)(q-1)}{d}} = q^{\frac{d-1}{2}} T^{\frac{(d-1)(d+1)(q-1)}{8d}}\left(-1 \right) T^{-l}\left(d^d\right) G_{ld}.$
\end{center}
If $d$ is an even integer, then
\begin{center}
 $G_{l}G_{l+t\frac{q-1}{d}}G_{l+t\frac{2(q-1)}{d}}\cdots G_{l+t\frac{(d-1)(q-1)}{d}}=q^{\frac{d-2}{2}}G_{\frac{q-1}{2}}
T^{\frac{(d-2)(q-1)}{8}}(-1)
T^{-l}\left(d^d\right)G_{ld}.$
\end{center}
\end{corollary}
 The following lemma gives the values of the Gauss sum at the trivial and the quadratic characters.
\end{thm}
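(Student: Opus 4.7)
The plan is to specialize the Davenport--Hasse relation (the preceding theorem) with $m=d$ and $\psi=T^l$. Since $q\equiv 1\pmod d$, the characters $\chi$ of $\mathbb{F}_q^*$ satisfying $\chi^d=\varepsilon$ are exactly $T^{k(q-1)/d}$ for $k=0,1,\ldots,d-1$. With this choice, $\chi\psi=T^{l+k(q-1)/d}$ and $\psi^m=T^{ld}$, so Davenport--Hasse reads
\begin{equation*}
\prod_{k=0}^{d-1}G_{l+k(q-1)/d}\;=\;-\,G_{ld}\,T^{-l}(d^d)\prod_{k=0}^{d-1}G_{k(q-1)/d}.
\end{equation*}
The whole job then reduces to evaluating the $\psi$-independent product on the right.

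The factor $G_0=G(\varepsilon)=-1$ takes care of the $k=0$ term. For $k=1,\ldots,d-1$ I would pair $k$ with $d-k$, observing that $G_{(d-k)(q-1)/d}=G_{-k(q-1)/d}$, and then apply Lemma \ref{l2.3}, which gives $G_m G_{-m}=qT^m(-1)$ whenever $T^m$ is nontrivial. Since $T$ generates $\widehat{\mathbb{F}_q^*}$ and $q$ is odd, $T(-1)=-1$, so each pair contributes $q\cdot(-1)^{k(q-1)/d}$. When $d$ is odd every $k\in\{1,\ldots,d-1\}$ pairs off, producing $q^{(d-1)/2}$ and a sign whose exponent is $\sum_{k=1}^{(d-1)/2}k(q-1)/d=(d-1)(d+1)(q-1)/(8d)$; rewriting $(-1)^j=T^j(-1)$ converts this into $T^{(d-1)(d+1)(q-1)/(8d)}(-1)$. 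When $d$ is even the term $k=d/2$ is self-paired and survives as $G_{(q-1)/2}$, while the remaining $(d-2)/2$ pairs contribute $q^{(d-2)/2}$ together with the sign $T^{(d-2)(q-1)/8}(-1)$ coming from $\sum_{k=1}^{d/2-1}k(q-1)/d$. Substituting these evaluations and noting that the $-1$ from $G_0$ cancels the global $-1$ in the Davenport--Hasse identity yields both displayed formulas.

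Finally, the parameter $t\in\{1,-1\}$ is a cosmetic distinction: the change $k\mapsto d-k$ is a bijection of $\{0,1,\ldots,d-1\}$ and sends $k(q-1)/d$ to $-k(q-1)/d$ modulo $q-1$, so the two products (for $t=+1$ and $t=-1$) index the \emph{same} multiset of Gauss sums. Thus it suffices to prove the identity for $t=1$, as above.

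I expect the main source of potential error to be the accounting of the $T(-1)$ signs: one must consistently use $T(-1)=-1$, keep exponents reduced modulo $q-1$, and be careful that the pairing arithmetic $\sum k(q-1)/d$ produces the precise exponent of $(q-1)/(8d)$ or $(q-1)/8$ appearing in the statement. Verifying the odd case with $d=3$ and the even case with $d=4$ against the stated formulas would be a useful sanity check before writing up the general computation.
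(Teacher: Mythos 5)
Your derivation is correct, and it is worth noting that the paper itself offers no proof here: both the Davenport--Hasse relation and the corollary are imported by citation (to Lang and to Barman--Kalita respectively), so there is no in-paper argument to compare against. Your route --- specialize the product formula with $m=d$, $\psi=T^l$, identify the characters with $\chi^d=\varepsilon$ as $T^{k(q-1)/d}$, and then evaluate the constant product $\prod_{k=0}^{d-1}G_{k(q-1)/d}$ by peeling off $G_0=-1$ (Lemma \ref{l2.9}) and pairing $k$ with $d-k$ via $G_mG_{-m}=qT^m(-1)$ (Lemma \ref{l2.3}) --- is the standard one and is essentially how the cited source proves it. Your exponent bookkeeping checks out: $\sum_{k=1}^{(d-1)/2}k=\tfrac{(d-1)(d+1)}{8}$ gives the $T^{(d-1)(d+1)(q-1)/(8d)}(-1)$ factor in the odd case, $\sum_{k=1}^{d/2-1}k=\tfrac{d(d-2)}{8}$ gives $T^{(d-2)(q-1)/8}(-1)$ with the self-paired middle term surviving as $G_{(q-1)/2}$ in the even case, and the $-1$ from $G_0$ cancels the global sign in Davenport--Hasse. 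The reduction of $t=-1$ to $t=+1$ by the multiset bijection $k\mapsto d-k$ is also right. The only point I would make explicit in a write-up is the convention under which the Davenport--Hasse product relation holds for \emph{all} $\psi$ (including those $l$ for which some $T^{l+k(q-1)/d}$ is trivial), namely $G(\varepsilon)=-1$; with that convention, which the paper adopts, no case distinction on $l$ is needed.
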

\begin{lem}\cite{fuselier2007hypergeometric}\label{l2.9}
We have
\begin{enumerate}
 \item $G(\varepsilon)=G_0=-1$,\\
 \item $G(\phi)=G_{\frac{q-1}{2}}=\begin{cases}
\sqrt{q}\quad\mbox{if}~ q\equiv1(\mathrm{mod}~4);\\
i\sqrt{q}\quad\mbox{if}~q\equiv3(\mathrm{mod}~4).
\end{cases}
$
\end{enumerate}
\end{lem}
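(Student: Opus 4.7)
For part (1), my plan is to compute $G(\epsilon)$ directly from its definition. Since the trivial character $\epsilon$ is extended to $\mathbb{F}_q$ by $\epsilon(0) = 0$ and equals $1$ on $\mathbb{F}_q^*$, we have $G(\epsilon) = \sum_{x \in \mathbb{F}_q^*} \theta(x)$. I would rewrite this as $\bigl(\sum_{x \in \mathbb{F}_q} \theta(x)\bigr) - 1$ and argue that the full sum vanishes: the trace map $\mathrm{tr}\colon \mathbb{F}_q \to \mathbb{F}_p$ is surjective with every fiber of size $q/p$, so $\sum_{x \in \mathbb{F}_q} \theta(x) = (q/p)\sum_{t \in \mathbb{F}_p} \zeta^t = 0$ since $\zeta$ is a nontrivial $p$-th root of unity. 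This yields $G(\epsilon) = -1$.

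For part (2), the strategy is to compute $G(\phi)^2$ using Lemma \ref{l2.3} and then resolve the remaining sign ambiguity by invoking Gauss's classical sign determination. Taking $m = (q-1)/2$, so that $T^m = \phi$ and $T^{-m} = \overline{\phi} = \phi$ (since $\phi^2 = \epsilon$), Lemma \ref{l2.3} gives $G(\phi)^2 = G_m G_{-m} = q\,\phi(-1)$. By Euler's criterion $\phi(-1) = (-1)^{(q-1)/2}$, so $G(\phi)^2 = q$ when $q \equiv 1 \pmod 4$ and $G(\phi)^2 = -q$ when $q \equiv 3 \pmod 4$. Consequently $G(\phi) \in \{\pm\sqrt{q}\}$ in the former case and $G(\phi) \in \{\pm i\sqrt{q}\}$ in the latter.

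The main obstacle is the sign determination: establishing that $G(\phi)$ equals $+\sqrt{q}$ or $+i\sqrt{q}$ rather than its negative. This is Gauss's celebrated sign theorem for the quadratic Gauss sum; it is strictly deeper than the squaring identity just derived and genuinely resists elementary character-sum manipulations. Standard proofs proceed either via the sine-product factorization of $\prod_{k=1}^{(p-1)/2}(\zeta^k - \zeta^{-k})$ with a careful argument tracking, or via Poisson summation applied to an appropriate theta series; for prime powers $q = p^n$ with $n > 1$ one then combines this with the Hasse--Davenport lifting identity. I would cite one of these well-documented classical treatments rather than reproduce it, since apart from this input the lemma is otherwise an immediate consequence of the definition of $G$ together with the product formula $G_m G_{-m} = q\,T^m(-1)$.
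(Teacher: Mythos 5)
The paper offers no proof of this lemma at all: it is quoted verbatim from Fuselier's paper as a known fact, so there is nothing internal to compare your argument against. Your proof of part (1) is complete and correct --- since $\epsilon(0)=0$ by convention, $G(\epsilon)=\sum_{x\in\mathbb{F}_q^*}\theta(x)=\bigl(\sum_{x\in\mathbb{F}_q}\theta(x)\bigr)-1=-1$, the full sum vanishing because the trace is surjective onto $\mathbb{F}_p$ with fibers of equal size $q/p$ and $\sum_{t\in\mathbb{F}_p}\zeta^t=0$. For part (2), computing $G(\phi)^2=G_mG_{-m}=q\,\phi(-1)$ with $m=\frac{q-1}{2}$ via Lemma \ref{l2.3} and then isolating the sign determination as the genuinely deep input (Gauss's sign theorem) is the standard and appropriate route, and citing that classical result rather than reproving it is reasonable here, since the paper itself cites the entire lemma.

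One caution about the final step of your plan: for $q=p^n$ with $n>1$, the Hasse--Davenport lifting relation reads $-G(\chi')=(-G(\chi))^n$, which introduces a factor $(-1)^{n-1}$ rather than giving $G(\chi')=G(\chi)^n$ outright. Carrying this out for the quadratic character with $p\equiv 1\pmod 4$ and $n$ even yields $G(\phi)=-\sqrt{q}$, not $+\sqrt{q}$, even though $q\equiv 1\pmod 4$. So ``combining Gauss's theorem with Hasse--Davenport'' does not straightforwardly produce the signs as displayed; the clean formula in the lemma is literally correct for $q=p$ prime (the setting of Fuselier's original paper) but requires a correction factor for general prime powers. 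This is a defect in the lemma as commonly quoted rather than in your reasoning, but if you execute your outlined strategy honestly you will be forced to confront it, so you should either restrict to $q=p$ or state the sign in the form $(-1)^{n-1}$ times the displayed value.
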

\begin{thm}\cite{kireland}
Let $\theta$ be an additive character and $x,y,z\in\mathbb{F}_q$. Then we have
\begin{align*}
\sum_{z\in\mathbb{F}_q}{\theta(z(x-y))}=q\delta(x,y), 
\end{align*}
where $\delta(x,y)=\begin{cases}
1\quad\mbox{if}~x=y;\\
0\quad\mbox{if}~x\neq y.
\end{cases}
$
\end{thm}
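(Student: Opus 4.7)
The plan is to split the verification into the two cases dictated by the indicator $\delta(x,y)$ and exploit the definition $\theta(\alpha)=\zeta^{tr(\alpha)}$ with $\zeta=e^{2\pi i/p}$ from Section 2.

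First I would handle the case $x=y$. Here $z(x-y)=0$ for every $z\in\mathbb{F}_q$, and since $tr(0)=0$ we get $\theta(0)=\zeta^{0}=1$. Thus the left-hand side reduces to $\sum_{z\in\mathbb{F}_q}1=q$, which matches $q\delta(x,y)=q$.

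Next I would treat the case $x\neq y$. Set $w=x-y\in\mathbb{F}_q^*$. Because multiplication by $w$ is a bijection on $\mathbb{F}_q$, the substitution $u=zw$ gives
\begin{equation*}
\sum_{z\in\mathbb{F}_q}\theta(z(x-y))=\sum_{u\in\mathbb{F}_q}\theta(u)=\sum_{u\in\mathbb{F}_q}\zeta^{tr(u)}.
\end{equation*}
To evaluate this I would group terms according to the value of the trace: since $tr\colon\mathbb{F}_q\to\mathbb{F}_p$ is an $\mathbb{F}_p$-linear surjection, each fiber has size $q/p$, so
\begin{equation*}
\sum_{u\in\mathbb{F}_q}\zeta^{tr(u)}=\frac{q}{p}\sum_{t\in\mathbb{F}_p}\zeta^{t}=\frac{q}{p}\cdot 0 = 0,
\end{equation*}
using the standard fact that the sum of all $p$-th roots of unity vanishes. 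This equals $q\delta(x,y)=0$ in this case, completing the argument.

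The only genuine input beyond bookkeeping is the surjectivity of the trace, which guarantees that $\theta$ is a nontrivial additive character so that the inner sum collapses to zero; everything else is a direct substitution. There is no real obstacle here — the statement is the standard additive orthogonality relation, and the single step that needs care is the change of variable $u=zw$, which requires $w\ne 0$ and therefore is exactly the point at which the two cases separate.
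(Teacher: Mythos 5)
Your proof is correct and complete. The paper itself states this orthogonality relation without proof (it is invoked later only as ``the elementary identity'' in the proof of Theorem 3.1), so there is no argument in the paper to compare against; your two-case argument --- direct evaluation when $x=y$, and the change of variable $u=zw$ combined with the surjectivity of the trace and the vanishing of $\sum_{t\in\mathbb{F}_p}\zeta^{t}$ when $x\neq y$ --- is the standard derivation and correctly isolates the one nontrivial input (nontriviality of $\theta$ via surjectivity of $tr$).
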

\begin{definition}\cite{lidl1997finite}\label{d2.11}
If $A_0, A_1,\cdots, A_n$ and $B_1, B_2,\cdots,B_n$ are characters of  $\mathbb{F}_q$ and $x\in\mathbb{F}_q$, then the Gaussian 
hypergeometric series $_{n+1}F_n
$ over $\mathbb{F}_q$ is defined as\\
 \begin{equation*}
 _{n+1}F_n
\left(
\begin{matrix}
A_0,&A_1,&\cdots, A_n&\\
&&&|x\\
&B_1,&\cdots, B_n&
\end{matrix}\right)
=\frac{q}{q-1}\sum\limits_{\chi}{\binom{A_0\chi}{\chi}\binom{A_1\chi}{B_1\chi}\cdots \binom{A_n\chi}{B_n\chi}}\chi(x).
\end{equation*}
\end{definition}
\begin{thm}\cite{greene1987hypergeometric}\label{tg4.4}
 For characters $A,B,C$ on $\mathbb{F}_q$ and $x\in\mathbb{F}_q$,
\begin{enumerate}
 \item 
$
 _{2}F_1
\left(
\begin{matrix}
A,&B&\\
&&|x\\
&C&
\end{matrix}\right)
=A(-1) _{2}F_1
\left(
\begin{matrix}
A,&B&\\
&&|1-x\\
&AB\overline{C}&
\end{matrix}\right)
+A(-1)\binom{B}{\overline{A}C}\delta(1-x)-\binom{B}{C}\delta(x),
$
\item
$
 _{2}F_1
\left(
\begin{matrix}
A,&B&\\
&&|x\\
&C&
\end{matrix}\right)
=C(-1)\overline{A}(1-x) _{2}F_1
\left(
\begin{matrix}
A,&C\overline{B}&\\
&&|\frac{x}{x-1}\\
&C&
\end{matrix}\right)
+A(-1)\binom{B}{\overline{A}C}\delta(1-x).
$
\end{enumerate}
\end{thm}

\section{Main Theorems}
In this section, we state and prove the main theorems of this paper.
\begin{thm}\label{mt1}
Let $e$ and $d$ be positive integers and let $N_{e,d}$ denote the number of $\mathbb{F}_q$ points on $E_{e,d}:y^e=x^d+ax+b$ excluding
 the points at infinity.
If $d\geq2$ is an even integer, and $q\equiv1{(\mathrm{mod}{(ed(d-1))})}$, then
\begin{equation*}
N_{e,d}=q+\sum_{i=1}^{e-1}T^{-\frac{i(q-1)}{e}}(b)+\frac{1}
{T^{\frac{(d-2)(2d-1)(q-1)}{8(d-1)}}(-1)}\sum_{i=1}^{e-1}M_iT^{\frac{i(q-1)}{e}}\left(\frac{d-1}{b}\right)\times
\end{equation*}
\begin{align*}
_{d}F_{d-1}
\left(
\begin{matrix}
\phi,&\varepsilon,&\chi,&\chi^2,\cdots,\chi{\frac{d-2}{2}},&\chi^{\frac{d+2}{2}},\cdots,\chi^{d-1}\\
&&&&&|\alpha\\
&\psi^{(\frac{d}{2}e-i)},&\psi^{e-i},&\psi^{2e-i},\cdots,\psi^{\frac{d-2}{2}e-i},&\psi^{\frac{d+2}{2}e-i},\cdots,\psi^{(d-1)e-i}
\end{matrix}\right),
\end{align*}
where $\alpha=\frac{d}{a}{(\frac{bd}{a(d-1)})^{d-1}}$, $\chi$ and $\psi$ are characters of orders $d$ and $e(d-1)$
respectively and
\begin{align*}
M_i=&G_{-\frac{i(q-1)}{e}}G_{-\frac{(\frac{d}{2}e-i)(q-1)}{e(d-1)}}G_{\frac{(id-e)(q-1)}{ed(d-1)}}
G_{\frac{(id-2e)(q-1)}{ed(d-1)}}\cdots
G_{\frac{(id-(\frac{d}{2}-1)e)(q-1)}{ed(d-1)}}
G_{\frac{id-\left(\frac{d}{2}+1\right)e(q-1)}{ed(d-1)}}\\
&\cdots G_{\frac{(id-(d-1)e)(q-1)}{ed(d-1)}}, i=1,2,\cdots,(e-1).
\end{align*}
\end{thm}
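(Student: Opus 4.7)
The plan is to reduce the point count to a multiplicative character sum over $\mathbb{F}_q$, evaluate each inner sum via Gauss-sum (Fourier) inversion, and repackage the outcome as a $_dF_{d-1}$ series using the Davenport--Hasse corollary.

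Setting $\chi = T^{(q-1)/e}$ of order $e$, for every $c \in \mathbb{F}_q^*$ the equation $y^e = c$ has $\sum_{i=0}^{e-1}\chi^i(c)$ solutions, while $y^e=0$ has the single root $y=0$. Folding these cases together using $\varepsilon(0)=0$ yields
\[
N_{e,d} = q + \sum_{i=1}^{e-1}\sum_{x \in \mathbb{F}_q}\chi^i(x^d + ax + b),
\]
so it suffices to evaluate $S_i = \sum_x A(f(x))$ with $A = \chi^i$ nontrivial. I would apply the Fourier identity $A(y)\,G(\overline A) = \sum_t \overline A(t)\,\theta(ty)$ to write
\[
S_i \, G_{-i(q-1)/e} = \sum_t \overline A(t)\,\theta(tb)\sum_x \theta\bigl(t(x^d + ax)\bigr).
\]
Splitting off $x = 0$ (contributing $1$) and expanding $\theta(tx^d)$ and $\theta(atx)$ for $x\neq 0$ via Lemma~\ref{l2.5}, then collapsing the resulting double exponent sum by the multiplicative orthogonality of Lemma~\ref{l2.6}(1), produces
\[
\sum_x \theta\bigl(t(x^d + ax)\bigr) = 1 + \frac{1}{q-1}\sum_m G_m\,G_{-md}\,T^{-m(d-1)}(t)\,T^{-md}(a).
\]
Performing the $t$-sum against $\overline A(t)\theta(tb)$ via $\sum_t B(t)\theta(tb) = \overline B(b)G(B)$ (which vanishes when $B$ is trivial) extracts the boundary term $A(b) G_{-i(q-1)/e}$ from the constant $1$, leaving
\[
S_i = A(b) + \frac{A(b)}{(q-1)G_{-i(q-1)/e}}\sum_m G_m\,G_{-md}\,G_{-i(q-1)/e - m(d-1)}\,T^m\bigl(b^{d-1}/a^d\bigr).
\]
Summing $A(b) = \chi^i(b)$ over $i \in \{1,\ldots,e-1\}$ and reindexing $i \mapsto e-i$ converts it into the stated $\sum_{i=1}^{e-1}T^{-i(q-1)/e}(b)$.

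To recognize the main sum as a hypergeometric series, I would invoke the even-$d$ case of the Davenport--Hasse corollary with shift $l = -m$, which rewrites
\[
G_{-md} = \frac{\prod_{j=0}^{d-1} G_{-m + j(q-1)/d}}{q^{(d-2)/2}\,G_{(q-1)/2}\,T^{(d-2)(q-1)/8}(-1)\,T^m(d^d)}.
\]
Lemma~\ref{l2.3} contracts $G_m G_{-m} = qT^m(-1)$, so exactly the $d-1$ Gauss sums at indices $-m + j(q-1)/d$ for $j = 1, \ldots, d-1$ survive; together with $G_{-i(q-1)/e - m(d-1)}$ they comprise the stated product $M_i$. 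I would then reindex the sum over $m$ as a sum over a character $\psi$ of order $e(d-1)$ (set $\psi = T^{(q-1)/(e(d-1))}$) and convert the surviving Gauss-sum factors into binomial coefficients via Lemma~\ref{l2.4}. The resulting summand matches Definition~\ref{d2.11} of $_dF_{d-1}$ with upper characters $\phi,\varepsilon,\chi,\chi^2,\ldots,\chi^{(d-2)/2},\chi^{(d+2)/2},\ldots,\chi^{d-1}$ and lower characters $\psi^{(d/2)e - i},\psi^{e - i},\ldots,\psi^{(d-1)e - i}$. Consolidating the character evaluations of $a$, $b$, $d$, $(d-1)$, and $-1$ collapses to the prefactor $T^{i(q-1)/e}(\tfrac{d-1}{b})/T^{(d-2)(2d-1)(q-1)/(8(d-1))}(-1)$ and the stated argument $\alpha = \tfrac{d}{a}\bigl(\tfrac{bd}{a(d-1)}\bigr)^{d-1}$.

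The main obstacle is the bookkeeping in the last stage: choosing the Davenport--Hasse shift so that the $d-1$ surviving Gauss sums line up with the correct upper characters $\chi^j$, and tracking every evaluation at $-1$, $d^d$, $a$, $b$, and $(d-1)$ so they consolidate into exactly the stated prefactor. Because $d$ is even, $\chi^{d/2} = \phi$ coincides with the quadratic character and must be isolated as a distinguished upper parameter paired with $\psi^{(d/2)e - i}$ downstairs, which is why the explicit factor $G_{(q-1)/2}$ enters $M_i$. The ``exceptional'' values of $m$ for which a Gauss sum degenerates to $G_0 = -1$ also need to be checked to either vanish or be absorbed into the boundary term, closing the identification with the claimed $_dF_{d-1}$ series.
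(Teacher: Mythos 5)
Your opening is sound and genuinely different from the paper's: instead of inserting the additive\nobreakdash-character delta $\sum_{z}\theta(zP(x,y))$ over an auxiliary variable and decomposing $qN_{e,d}=q^2+A+B+C+D$, you count the fibres of $y^e=c$ with multiplicative characters and invert $\chi^i(f(x))$ by a Gauss sum. Up to your expression for $S_i$ this is a legitimate alternative route, and it matches the paper's intermediate sum $D_{\frac{i(q-1)}{e}}$ (Eq.\ (3.2)) after accounting for the relation $G_{\frac{i(q-1)}{e}}G_{-\frac{i(q-1)}{e}}=qT^{\frac{i(q-1)}{e}}(-1)$ and the overall factor of $q$.

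The gap is in the hypergeometric identification, and it is twofold. First, you apply Davenport--Hasse only to $G_{-md}$; the companion application to $G_{-\frac{i(q-1)}{e}-m(d-1)}=G_{(d-1)\left(-m-\frac{i(q-1)}{e(d-1)}\right)}$, which splits it into $d-1$ Gauss sums shifted by multiples of $\frac{q-1}{d-1}$, is indispensable: it is the only source of the lower parameters $\psi^{je-i}$ of order $e(d-1)$ and of the factors $G_{\frac{(id-je)(q-1)}{ed(d-1)}}$ in $M_i$. As written you leave that factor intact and even assign it --- an $m$-dependent quantity --- to the $m$-independent constant $M_i$. Second, contracting $G_mG_{-m}=qT^m(-1)$ via Lemma \ref{l2.3} is the wrong move for even $d$: the identity fails at $m=0$, forcing a boundary correction that the stated formula does not contain (this is precisely the extra $N_i$ term that appears in the odd-$d$ Theorem \ref{mt2}), and it leaves only $2(d-1)$ $m$-dependent Gauss sums, hence $d-1$ binomial coefficients after Lemma \ref{l2.4}, which assembles into a $_{d-1}F_{d-2}$ rather than the stated $_dF_{d-1}$. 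To land on the stated series one must instead cross-pair: couple the standalone Gauss sum with the half-index Davenport--Hasse factor at shift $\frac{q-1}{2}$ (producing the upper parameter $\phi$), and couple the $j=0$ factor of one Davenport--Hasse product with the $j=\frac{d}{2}$ factor of the other (producing the upper $\varepsilon$ over the lower $\psi^{\frac{d}{2}e-i}$). That cross-pairing is exactly what keeps all $2d$ Gauss sums inside the Jacobi-sum machinery and deposits the two extra factors $G_{-\frac{i(q-1)}{e}}$ and $G_{-\frac{(\frac{d}{2}e-i)(q-1)}{e(d-1)}}$ into $M_i$; without it your plan proves a different (odd-$d$-style) identity, not the one claimed.
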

\begin{rem}
In the above theorem, by using Lemmas \ref{l2.4} and \ref{GJ}, we can simplify the expression of $M_i$. If $e = 2$, then 
$ M_1 =q^{\frac{d}{2}}T^{\frac{q-1}{2}}(-1)$, and if $ e \neq 2$, then
\begin{align*}
M_i=&q^2\frac{G_{\frac{(2i-e)(q-1)}{2e(d-1)}}}{G_{\frac{(id-\frac{d}{2}e)(q-1)}{ed(d-1)}}}
\binom{T^{\frac{(2i-e)(q-1)}{2e}}}{T^{\frac{i(q-1)}{e}}}
\binom{T^{\frac{q-1}{2}}}{T^{\frac{(\frac{d}{2}e-i)(q-1)}{e(d-1)}}}T^{\frac{(2i(d-2)+ed)(q-1)}{2e(d-1)}}(-1)\\
&\times J\left(T^{\frac{(id-e)(q-1)}{ed(d-1)}}, T^{\frac{(id-2e)(q-1)}{ed(d-1)}}, \dots, T^{\frac{(id-(d-1)e)(q-1)}{ed(d-1)}}\right),i=1,2,\cdots,(e-1).
\end{align*}
\end{rem}
\begin{thm}\label{mt2}
Let $e$ and $d$ be positive integers and let $N_{e,d}$ denote the number of $\mathbb{F}_q$ points on $E_{e,d}:y^e=x^d+ax+b$ excluding the 
 points at infinity. If $d\geq2$ is an odd integer, and $q\equiv1{(\mathrm{mod}{(ed(d-1))})}$, then
\begin{align*}
&N_{e,d}=\\
&q+\sum_{i=1}^{e-1}T^{-\frac{i(q-1)}{e}}(b)-\frac{T^{\frac{(3d-1)(q-1)}{8d}}(-1)}{q^{(d-1)}G_{\frac{q-1}{2}}}
\sum_{i=1}^{e-1}N_iG_{-\frac{i(q-1)}{e}}T^{\frac{i(q-1)}{e}}(-1)
T^{\frac{(e-i)(q-1)}{e}}\left(\frac{b}{d-1}\right)\\
&+\frac{T^{\frac{(4d^2+3d-1)(q-1)}{8d}}(-1)}{G_{\frac{q-1}{2}}}\sum_{i=1}^{e-1}G_{-\frac{i(q-1)}{e}}
T^{\frac{-i(q-1)}{e}}\left(\frac{b}{d-1}\right)T^{\frac{(e-i)(q-1)}{e(d-1)}}\left(-\frac{1}{\alpha}\right)M_i\times\\
&_{d-1}F_{d-2}
\left(
\begin{matrix}
\eta^{id-e},&\eta^{de+id-2e},\cdots,\eta^{\frac{ed^2-4ed+2id+e}{2}},&\eta^{\frac{ed^2-2ed+2id-e}{2}},\cdots,\eta^{ed^2-3ed+id+e}\\
&&\quad\quad\quad\quad\quad\quad\quad\quad\quad\quad\quad\quad|-\alpha\\
&\psi^e,\cdots,\psi^{\frac{d-3}{2}e},&\psi^{\frac{d-1}{2}e},\cdots,\psi^{(d-2)e}
\end{matrix}\right),
\end{align*}
where $\alpha=\frac{d}{a}{(\frac{bd}{a(d-1)})^{d-1}}$, $\eta$ and $\psi$ are characters of orders $ed(d-1)$ and $e(d-1)$
respectively,\\
$M_i=G_{\frac{(id-e)(q-1)}{ed(d-1)}}$ $G_{\frac{(id-2e)(q-1)}{ed(d-1)}}\cdots$
$G_{\frac{\left(id-\frac{d-1}{2}e\right)(q-1)}{ed(d-1)}}$ $G_{\frac{\left(id-\frac{d+1}{2}e\right)(q-1)}{ed(d-1)}}$
$\cdots G_{\frac{(id-(d-1)e)(q-1)}{ed(d-1)}}$\\
and\\
$N_i=\{G_{\frac{q-1}{d}}G_{-\frac{(e-i)(q-1)}{e(d-1)}}\}
\{G_{\frac{2(q-1)}{d}}G_{-\frac{(2e-i)(q-1)}{e(d-1)}}\}\cdots
\{G_{\frac{(d-1)(q-1)}{2d}}~G_{-\frac{(\frac{(d-1)}{2}e-i)(q-1)}{e(d-1)}}\}\\
~~~~~~\times\{G_{\frac{(d+1)(q-1)}{2d}}G_{-\frac{(\frac{d+1}{2}e-i)(q-1)}{e(d-1)}}\}\cdots
\{G_{\frac{(d-1)(q-1)}{d}}G_{-\frac{((d-1)e-i)(q-1)}{e(d-1)}}\}$,
$i=1,2,\cdots,(e-1)$.
\end{thm}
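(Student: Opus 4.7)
I would begin by converting the point count into a multiplicative character sum. Since $q\equiv 1\pmod e$, for $z\in\mathbb{F}_q^*$ we have $\#\{y\in\mathbb F_q:y^e=z\}=1+\sum_{i=1}^{e-1}T^{i(q-1)/e}(z)$, and $z=0$ contributes a single $y=0$. Summing over $x\in\mathbb F_q$ with $z=x^d+ax+b$ gives
\[
N_{e,d}=q+\sum_{i=1}^{e-1}S_i,\qquad S_i=\sum_{x\in\mathbb F_q}T^{\frac{i(q-1)}{e}}(x^d+ax+b).
\]
Each $S_i$ is evaluated by two successive applications of the character expansion (2.6). Factoring $x^d+ax+b=b\bigl(1+(ax+x^d)/b\bigr)$ introduces a character $\chi$ with $\binom{T^{i(q-1)/e}}{\chi}$; for $x\neq 0$, the further factorisation $\chi(ax+x^d)=\chi(ax)\chi(1+x^{d-1}/a)$ followed by a second expansion introduces a character $\psi$ with $\binom{\chi}{\psi}$. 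The $x$-sum over $\mathbb F_q^*$ of $\chi\psi^{d-1}(x)$ is killed by Lemma \ref{l2.6}(1) unless $\chi\psi^{d-1}=\varepsilon$, and parameterising the solution by $\chi=\psi^{-(d-1)}$ collapses the double sum to a single sum over $\psi$. The boundary pieces (the $x=0$ term and the $\delta$-contributions in (2.6)) recombine, after re-indexing $i\mapsto e-i$, into the constant summand $\sum_{i=1}^{e-1}T^{-i(q-1)/e}(b)$.

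Next I would reshape the remaining $\psi$-sum into the $_{d-1}F_{d-2}$ form of Definition \ref{d2.11}. After the substitution $\chi=\psi^{-(d-1)}$, the characters in $a,b$ coming from the two expansions regroup as $\psi(b^{d-1}/a^d)=\psi(\alpha)\,\psi((d-1)^{d-1}/d^d)$ with $\alpha=\tfrac{d}{a}\bigl(\tfrac{bd}{a(d-1)}\bigr)^{d-1}$, and a residual $T^{?}(-1)$ collected via properties (2.7)--(2.9) changes the hypergeometric argument to $-\alpha$. The characters $\eta$ of order $ed(d-1)$ in the upper line of the series appear when the odd-$d$ Davenport--Hasse corollary is applied to the Gauss sum $G_{i(q-1)/e}=G_{\ell d}$ with $\ell=i(q-1)/(ed)$: this splits it into $\prod_{k=0}^{d-1}G_{\ell+k(q-1)/d}$, whose shifted indices reduce exactly to the exponents $id-e,\;de+id-2e,\dots,ed^2-3ed+id+e$ of $\eta$ listed in the statement. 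Converting the resulting Gauss sums back into $d-1$ binomial coefficients via Lemma \ref{l2.4} supplies the upper parameters of the series, while the lower parameters $\psi^{je}$ come from a similar treatment of $\binom{\psi^{-(d-1)}}{\psi}$ via Lemma \ref{GJ}.

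The $N_i$-term arises from the part of the $\psi$-sum that lies outside the hypergeometric packaging. A \emph{second} application of Davenport--Hasse, now with the even parameter $d-1$ and $t=-1$, applied to the product $\prod_{k=1}^{d-1}G_{-(ke-i)(q-1)/(e(d-1))}$ that appears naturally in the expansion, produces a factor $q^{(d-3)/2}G_{(q-1)/2}T^{(d-3)(q-1)/8}(-1)\,T^{?}\bigl((d-1)^{d-1}\bigr)\,G_{i(q-1)/e}$; the even-parameter case of the corollary is where the $G_{(q-1)/2}=G(\phi)$ of Lemma \ref{l2.9}(2) that appears in the denominator of the middle term comes from. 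Combining with the $T^{(d-1)(d+1)(q-1)/(8d)}(-1)$ produced by the first Davenport--Hasse application reproduces the claimed $N_i/(q^{d-3}G_{(q-1)/2})$ prefactor. The main obstacle will be the precise bookkeeping of the many $T^{?}(-1)$ signs, the powers of $q$, and the exponents of $\eta$ and $\psi$ modulo $q-1$ produced by composing the two Davenport--Hasse splittings, the Gauss--Jacobi conversions via Lemmas \ref{l2.3}--\ref{GJ}, and the substitution $\chi=\psi^{-(d-1)}$; in particular, identifying the sign exponents $(3d-1)(q-1)/(8d)$ and $(4d^2+3d-1)(q-1)/(8d)$ in the two prefactors requires a careful combination of the Davenport--Hasse constants $(d-1)(d+1)(q-1)/(8d)$ and $(d-3)(q-1)/8$ together with shifts introduced by Lemma \ref{l2.3}.
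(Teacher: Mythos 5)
Your route is genuinely different from the paper's: you count points with multiplicative characters, writing $N_{e,d}=q+\sum_{i=1}^{e-1}\sum_x T^{\frac{i(q-1)}{e}}(x^d+ax+b)$ and attacking each inner sum with two applications of Greene's expansion of $A(1+x)$, whereas the paper starts from $qN_{e,d}=\sum_{x,y,z}\theta(zP(x,y))$ with the additive character, converts everything to Gauss sums by Lemma \ref{l2.5}, and only then uses orthogonality to collapse the sums. Both routes must pass through the same two Davenport--Hasse applications (odd parameter $d$, and even parameter $d-1$, the latter being the source of the $G_{\frac{q-1}{2}}$ and the power of $q$ in the $N_i$ prefactor), so the plan is viable in outline. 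One step as you describe it would fail, however: you claim the upper parameters $\eta^{id-e},\eta^{de+id-2e},\dots$ arise from applying Davenport--Hasse to $G_{\frac{i(q-1)}{e}}=G_{\ell d}$. That Gauss sum does not depend on the summation character $\psi$, so splitting it yields only a constant factor; it cannot produce the $d-1$ binomial coefficients $\binom{A_j\chi}{B_j\chi}$ of Definition \ref{d2.11}, whose arguments must run with the summation variable. In your setup the splittings that matter are of $G(\overline{\psi}^{d})$ and $G(\overline{\psi}^{d-1})$ (coming from $\binom{\overline{\psi}^{d-1}}{\psi}$ and $\binom{T^{i(q-1)/e}}{\overline{\psi}^{d-1}}$ via Lemmas \ref{l2.4} and \ref{GJ}); with $\psi=T^m$ these have indices $\mp dm$ and $\mp(d-1)m$, and it is their shifted factors, re-paired by Lemma \ref{l2.4}, that carry the exponents $\frac{(id-ke)(q-1)}{ed(d-1)}$.

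The more serious omission is that you never isolate the mechanism producing the $N_i$ summand. It is not a by-product of the even-parameter Davenport--Hasse relation: in the paper, after both splittings the $m$-sum contains a pair $G_mG_{-m}$, and the identity $G_mG_{-m}=qT^m(-1)$ of Lemma \ref{l2.3} fails exactly at $m=0$, where $G_0G_0=1=qT^0(-1)-(q-1)$; the resulting $-(q-1)\times(m{=}0)$ correction is precisely the $N_i$ term, and without separating it the remaining sum is not a hypergeometric series. In your multiplicative-character version the analogous exceptional set is worse: the pair that appears is of the form $G_{(d-1)m}G_{-(d-1)m}$ (and Lemma \ref{l2.4} breaks down whenever the relevant $T^{m-n}$ is trivial), so there are several exceptional characters whose contributions must be shown to assemble into the single $N_i$ term. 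Relatedly, the $\delta$-term of your first expansion contributes $T^{\frac{i(q-1)}{e}}(b)\,\#\{x:x^d+ax=0\}=T^{\frac{i(q-1)}{e}}(b)\bigl(1+\#\{x\neq0:x^{d-1}=-a\}\bigr)$; only the first piece re-indexes to $\sum_{i}T^{-\frac{i(q-1)}{e}}(b)$, and the second piece must be shown to cancel against the other boundary contributions rather than asserted to ``recombine''. These exceptional and boundary terms are where the actual content of the theorem lives, and the proposal currently treats them as bookkeeping.
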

\begin{rem}
In the above theorem, by using Lemmas \ref{l2.3}, \ref{l2.4} and \ref{GJ}, we can simplify the expressions of $M_i$ and $N_i$.
If $e = 2$, then $M_1=q^{\frac{d-1}{2}}T^{-\frac{(d-1)(q-1)}{8d}}(-1)$, and $N_1=q^{d-1}T^{-\frac{(d-1)(q-1)}{8d}}(-1)$. If $ e \neq 2$, then
\begin{equation*}
M_i=G_{\frac{(2i-e)(q-1)}{2e}}J\left(T^{\frac{(id-e)(q-1)}{ed(d-1)}}, T^{\frac{(id-2e)(q-1)}{ed(d-1)}}, 
\dots, T^{\frac{(id-(d-1)e)(q-1)}{ed(d-1)}} \right), i=1,2,\cdots,(e-1)
\end{equation*}
and
\begin{align*}
N_i=& q^{\frac{d-1}{2}}T^{\frac{(d^2-1)(q-1)}{8d}}(-1) & G_{-\frac{(ed-2i)(q-1)}{2e}}
J\left(T^{-\frac{(e-i)(q-1)}{e(d-1)}}, T^{-\frac{(2e-i)(q-1)}{e(d-1)}}, 
\dots, T^{-\frac{((d-1)e-i)(q-1)}{e(d-1)}}\right),\\
&i=1,2,\cdots, (e-1).
\end{align*}
\end{rem}

\begin{proof}[\bf{Proof of the Theorem \ref{mt1}}]
 Let $P(x,y)=x^d+ax+b-y^e$. Then
 $N_{e,d}=\#\{(x,y)\in{\mathbb{F}_q^2}:P(x,y)=0\}.$ Now using the elementary identity,
\begin{eqnarray*}
\sum\limits_{z\in\mathbb{F}_q}{\theta(zP(x,y))}=
\begin{cases}
q \quad \text{if}~ P(x,y)=0;\\
0 \quad \text{if}~ P(x,y)\neq 0,
\end{cases}
\end{eqnarray*}
we obtain that
\begin{align*}
qN_{e,d}=&\sum\limits_{x,y,z\in\mathbb{F}_q}{\theta(zP(x,y))}\\
=&\sum\limits_{x,y\in\mathbb{F}_q}{\theta(0P(x,y))}+\sum\limits_{z\in\mathbb{F}^*_q}{\theta(zP(0,0))}+
\sum\limits_{y,z\in\mathbb{F}^*_q}{\theta(zP(0,y))}\\
&+\sum\limits_{x,z\in\mathbb{F}^*_q}{\theta(zP(x,0))}+
\sum\limits_{x,y,z\in\mathbb{F}^*_q}{\theta(zP(x,y))}\\
=&q^2+\sum\limits_{z\in\mathbb{F}^*_q}\theta(bz)+\sum\limits_{y,z\in\mathbb{F}^*_q}\theta(bz)\theta(-zy^e)+
\sum\limits_{x,z\in\mathbb{F}^*_q}\theta(bz)\theta(zx^d)\theta(azx)\\
&+\sum\limits_{x,y,z\in\mathbb{F}^*_q}\theta(bz)\theta(zx^d)\theta(azx)\theta(-zy^e).
\end{align*}
This we can write as
\begin{align}
qN_{e,d}:=&q^2+A+B+C+D,\label{e3.1}
\end{align}
where 
\begin{align*}
A&=\sum\limits_{z\in\mathbb{F}^*_q}\theta(bz),~~
B=\sum\limits_{y,z\in\mathbb{F}^*_q}\theta(bz)\theta(-zy^e),\\
C&=\sum\limits_{x,z\in\mathbb{F}^*_q}\theta(bz)\theta(zx^d)\theta(azx) ~~\text{and}~~
D=\sum\limits_{x,y,z\in\mathbb{F}^*_q}\theta(bz)\theta(zx^d)\theta(azx)\theta(-zy^e).
\end{align*}
Following in a similar fashion as in \cite{barmanhyperelliptic} and using Lemmas \ref{l2.6}, \ref{l2.5} and \ref{l2.9}, we have\\
$A=-1$ and $B=1+q\sum_{i=1}^{e-1}T^{-\frac{i(q-1)}{e}}(b)$.\\
Similarly, we calculate $C$ and $D$,
\begin{align*}
C=\frac{1}{(q-1)^3}\sum_{l,m,n=0}^{q-2}{G_{-l}G_{-m}G_{-n}T^l(b)T^n(a)}\sum\limits_{z\in\mathbb{F}_q^*}{T^{l+m+n}}(z)
\sum\limits_{x\in\mathbb{F}_q^*}{T^{md+n}}(x)
\end{align*}
and
\begin{align*}
D=&\frac{1}{(q-1)^4}\sum_{l,m,n,k=0}^{q-2}{G_{-l}G_{-m}G_{-n}G_{-k}T^l(b)T^n(a)T^k(-1)}\sum\limits_{z\in\mathbb{F}_q^*}
{T^{l+m+n+k}}(z)\\
&\times\sum\limits_{x\in\mathbb{F}_q^*}{T^{md+n}}(x)
\sum\limits_{y\in\mathbb{F}_q^*}{T^{ek}(y)}.
\end{align*}
The innermost sum of $D$ is non zero only if $k=\frac{i(q-1)}{e}, i=0,1,\cdots(e-1).$
For $k=0, D=-C$. Thus, we can write
\begin{align*}
D=&-C+\frac{1}{(q-1)^3}\sum_{i=1}^{e-1}\sum_{l,m,n=0}^{q-2}{G_{-l}G_{-m}G_{-n}G_{-\frac{i(q-1)}{e}}T^l(b)T^n(a)
T^{\frac{i(q-1)}{e}}(-1)}\\
&\times\sum\limits_{z\in\mathbb{F}_q^*}{T^{l+m+n+\frac{i(q-1)}{e}}}(z)
\sum\limits_{x\in\mathbb{F}_q^*}{T^{md+n}}(x).
\end{align*}
Now we can write $D=-C+D^{'}$, where
\begin{align*}
D^{'}=&\frac{1}{(q-1)^3}\sum_{i=1}^{e-1}\sum_{l,m,n=0}^{q-2}{G_{-l}G_{-m}G_{-n}G_{-\frac{i(q-1)}{e}}T^l(b)T^n(a)
T^{\frac{i(q-1)}{e}}(-1)}\\
&\times\sum\limits_{z\in\mathbb{F}_q^*}{T^{l+m+n+\frac{i(q-1)}{e}}}(z)
\sum\limits_{x\in\mathbb{F}_q^*}{T^{md+n}}(x).
\end{align*}
Here, the term $D^{'}=0$ unless $n=-md$ and $l=(d-1)(m+\frac{(e-i)(q-1)}{e(d-1)})$. Thus
\begin{align}
D^{'}=&\frac{1}{(q-1)}\sum_{i=1}^{e-1}G_{-\frac{i(q-1)}{e}}T^{\frac{i(q-1)}{e}}(-1)
\sum_{m=0}^{q-2}{G_{(d-1)\left(-m-\frac{(e-i)(q-1)}{e(d-1)}\right)}}G_{-m}G_{md}\nonumber\\
&\times T^{(d-1)\left(m+\frac{(e-i)(q-1)}{e(d-1)}\right)}(b)
T^{-md}(a).\label{e3.2}
\end{align}
By putting the values of $A,B,C$ and $D$ in \eqref{e3.1}, we get
\begin{equation}
qN_{e,d}=q^2+q\sum_{i=1}^{e-1}T^{-\frac{i(q-1)}{e}}(b)+D^{'}.\label{e3.3}
\end{equation}
If $d\geq2$ is an even integer and $m\in\mathbb{Z}$, then Davenport-Hasse relation for $q\equiv1{(\mathrm{mod}~d)},
~t\in\{1,-1\}$ gives
\begin{equation*}
G_{dm}=\frac{G_mG_{m+\frac{q-1}{d}}G_{m+\frac{2(q-1)}{d}}\cdots G_{m+\frac{(d-1)(q-1)}{d}}}{q^{\frac{d-2}{2}}G_{\frac{q-1}{2}}
T^{\frac{(d-2)(q-1)}{8}}(-1)T^{-m}(d^d)}
\end{equation*}
and
\begin{equation*}
G_{(d-1)\big(-m-\frac{(e-i)(q-1)}{e(d-1)}\big)}=\frac{G_{-m-\frac{(e-i)(q-1)}{e(d-1)}}G_{-m-\frac{(2e-i)(q-1)}{e(d-1)}}
\cdots G_{-m-\frac{((d-1)e-i)(q-1)}{e(d-1)}}}{q^{\frac{d-2}{2}}T^{\frac{d(d-2)(q-1)}{8(d-1)}}(-1)
T^{m+\frac{(e-i)(q-1)}{e(d-1)}}(d-1)^{d-1}},\\
i=1,2,\cdots,(e-1).\\
\end{equation*}
Now using $G_{md}$ and $G_{(d-1)\big(-m-\frac{(e-i)(q-1)}{e(d-1)}\big)}$ in \eqref{e3.2}, we get\\
\begin{align*}
D^{'}=&\frac{1}
{q^{d-2}(q-1)G_{\frac{q-1}{2}}T^{\frac{(d-2)(2d-1)(q-1)}{8(d-1)}}(-1)}
\sum_{i=1}^{e-1}\frac{G_{-\frac{i(q-1)}{e}}T^{\frac{(e-i)(q-1)}{e}}(b)T^{\frac{i(q-1)}{e}}(-1)}{T^{\frac{(e-i)(q-1)}{e}}(d-1)}\\
&\times\sum_{m=0}^{q-2}\{G_{m}G_{-m}\}
\{G_{m+\frac{q-1}{d}}G_{-m-\frac{(e-i)(q-1)}{e(d-1)}}\}
\{G_{m+\frac{2(q-1)}{d}}G_{-m-\frac{(2e-i)(q-1)}{e(d-1)}}\}\cdots\\&\times\{G_{m+\frac{(d-2)(q-1)}{2d}}
G_{-m-\frac{(\frac{d-2}{2} e-i)(q-1)}{e(d-1)}}\}
\{G_{m+\frac{q-1}{2}}G_{-m-\frac{(\frac{d}{2}e-i)(q-1)}{e(d-1)}}\}\\
&\times\{G_{m+\frac{(d+2)(q-1)}{2d}}G_{-m-\frac{(\frac{d+2}{2}e-i)(q-1)}{e(d-1)}}\}\cdots
\{G_{m+\frac{(d-1)(q-1)}{d}}G_{-m-\frac{((d-1)e-i)(q-1)}{e(d-1)}}\}\\
&\times T^m\left(\frac{b^{d-1}d^d}{(d-1)^{d-1}a^d}\right).
\end{align*}
By arranging these terms, we get 
\begin{align*}
D^{'}=&\frac{1}
{q^{d-2}(q-1)G_{\frac{q-1}{2}}T^{\frac{(d-2)(2d-1)(q-1)}{8(d-1)}}(-1)}
\sum_{i=1}^{e-1}\frac{G_{-\frac{i(q-1)}{e}}T^{\frac{(e-i)(q-1)}{e}}(b)T^{\frac{i(q-1)}{e}}(-1)}{T^{\frac{(e-i)(q-1)}{e}}(d-1)}\\
&\times\sum_{m=0}^{q-2}{\{G_{m+\frac{q-1}{2}}G_{-m}\}}
\{G_mG_{-m-\frac{(\frac{d}{2}e-i)(q-1)}{e(d-1)}}\}
\{G_{m+\frac{q-1}{d}}G_{-m-\frac{(e-i)(q-1)}{e(d-1)}}\}\\
&\times\{G_{m+\frac{2(q-1)}{d}}G_{-m-\frac{(2e-i)(q-1)}{e(d-1)}}\}
\cdots\{ G_{m+\frac{(d-2)(q-1)}{2d}} G_{-m-\frac{(\frac{d-2}{2}e-i)(q-1)}{e(d-1)}}\}\\
&\times\{ G_{m+\frac{(d+2)(q-1)}{2d}} G_{-m-\frac{(\frac{d+2}{2}e-i)(q-1)}{e(d-1)}}\}\cdots
 \{ G_{m+\frac{(d-1)(q-1)}{d}} G_{-m-\frac{(d-1)e-i)(q-1)}{e(d-1)}}\}\\
 &\times T^m\left(\frac{b^{d-1}d^d}{(d-1)^{d-1}a^d}\right).
\end{align*}
By using Lemma \ref{l2.4} in the above equation, we have 
\begin{align*}
D^{'}=&\frac{q^2}
{(q-1)G_{\frac{q-1}{2}}T^{\frac{(d-2)(2d-1)(q-1)}{8(d-1)}}(-1)}
\sum_{i=1}^{e-1}G_{-\frac{i(q-1)}{e}}T^{\frac{i(q-1)}{e}}(-\frac{d-1}{b})\sum_{m=0}^{q-2}{G_{\frac{q-1}{2}}}T^m(-1)\\
&\times {\binom{T^{m+\frac{q-1}{2}}}{T^m}}
G_{-\frac{(\frac{d}{2}e-i)(q-1)}{e(d-1)}}
T^{m+\frac{(\frac{d}{2}e-i)(q-1)}{e(d-1)}}(-1)\binom{T^{m}}{T^{m+\frac{(\frac{d}{2}e-i)(q-1)}{e(d-1)}}}
G_{\frac{(id-e)(q-1)}{ed(d-1)}}\\
&\times T^{m+\frac{(e-i)(q-1)}{e(d-1)}}(-1)\binom{T^{m+\frac{q-1}{d}}}{T^{m+\frac{(e-i)(q-1)}{e(d-1)}}}
G_{\frac{(id-2e)(q-1)}{ed(d-1)}}
T^{m+\frac{(2e-i)(q-1)}{e(d-1)}}(-1)
\binom{T^{m+\frac{2(q-1)}{d}}}{T^{m+\frac{(2e-i)(q-1)}{e(d-1)}}}\\
&\times\cdots G_{\frac{(id-(\frac{d}{2}-1)e)(q-1)}{ed(d-1)}}
T^{m+\frac{(\frac{d-2}{2}e-i)(q-1)}{e(d-1)}}(-1)
 \binom{T^{m+\frac{(d-2)(q-1)}{2d}}}{T^{m+\frac{(\frac{id-2}{2}e-1)(q-1)}{e(d-1)}}}G_{\frac{(id-(\frac{d}{2}+1)e)(q-1)}{ed(d-1)}}\\
 &\times T^{m+\frac{(\frac{d+2}{2}e-i)(q-1)}{e(d-1)}}(-1)
\binom{T^{m+\frac{(d+2)(q-1)}{2d}}}{T^{m+\frac{(\frac{d+2}{2}e-i)(q-1)}{e(d-1)}}}
\cdots G_{\frac{(id-(d-1)e)(q-1)}{ed(d-1)}}T^{m+\frac{((d-1)e-i)(q-1)}{e(d-1)}}(-1)\\
&\times \binom{T^{m+\frac{(d-1)(q-1)}{d}}}{T^{m+\frac{((d-1)e-i)(q-1)}{e(d-1)}}}
T^m\left(\frac{b^{(d-1)}d^d}{(d-1)^{(d-1)}a^d}\right).
\end{align*}
By collecting the terms of $G_s$ and $T^s$, we get
\begin{align*}
D^{'}=&\frac{q^2}
{(q-1)T^{\frac{(d-2)(2d-1)(q-1)}{8(d-1)}}(-1)}
\sum_{i=1}^{e-1}M_iT^{\frac{i(q-1)}{e}}\left(-\frac{d-1}{b}\right)\sum_{m=0}^{q-2}T^{md+\frac{d(q-1)}{2}}(-1)\\
&\times T^{-\frac{i(q-1)}{e}}(-1)\binom{T^{m+\frac{q-1}{2}}}{T^m}\binom{T^{m}}{T^{m+\frac{(\frac{d}{2}e-i)(q-1)}{e(d-1)}}}
\binom{T^{m+\frac{q-1}{d}}}{T^{m+\frac{(e-i)(q-1)}{e(d-1)}}}
\binom{T^{m+\frac{2(q-1)}{d}}}{T^{m+\frac{(2e-i)(q-1)}{e(d-1)}}}\\
&\times\cdots\binom{T^{m+\frac{(d-2)(q-1)}{2d}}}
{T^{m+\frac{\frac{d-2}{2}e-i)(q-1)}{e(d-1)}}}
\binom{T^{m+\frac{(d+2)(q-1)}{2d}}}{T^{m+\frac{\frac{d+2}{2}e-i)(q-1)}{e(d-1)}}}\cdots
\binom{T^{m+\frac{(d-1)(q-1)}{d}}}{T^{m+\frac{((d-1)e-i)(q-1)}{e(d-1)}}}
T^m\left(\frac{b^{d-1}d^d}{(d-1)^{d-1}a^d}\right),
\end{align*}
where
\begin{align*}
M_i=&G_{-\frac{i(q-1)}{e}}G_{-\frac{(\frac{d}{2}e-i)(q-1)}{e(d-1)}}G_{\frac{(id-e)(q-1)}{ed(d-1)}}
G_{\frac{(id-2e)(q-1)}{ed(d-1)}}\cdots
G_{\frac{(id-(\frac{d}{2}-1)e)(q-1)}{ed(d-1)}}
G_{\frac{id-\left(\frac{d}{2}+1\right)e(q-1)}{e(d-1)}}\\
&\cdots G_{\frac{(id-(d-1)e)(q-1)}{ed(d-1)}}, i=1,2,\cdots,(e-1).
\end{align*}
Since $d$ is even, thus $T^{md+\frac{d(q-1)}{2}}(-1)=1$. By Definition \ref{d2.11}, we have
\begin{align*}
D^{'}=&\frac{q}
{T^{\frac{(d-2)(2d-1)(q-1)}{8(d-1)}}(-1)}\sum_{i=1}^{e-1}M_iT^{\frac{i(q-1)}{e}}\left(\frac{d-1}{b}\right)\\
&\times_{d}F_{d-1}
\left(
\begin{matrix}
\phi,&\varepsilon,&\chi,&\chi^2,\cdots,\chi{\frac{d-2}{2}},&\chi^{\frac{d+2}{2}},\cdots,\chi^{d-1}\\
&&&&&|\alpha\\
&\psi^{(\frac{d}{2}e-i)},&\psi^{e-i},&\psi^{2e-i},\cdots,\psi^{\frac{d-2}{2}e-i},&\psi^{\frac{d+2}{2}e-i},\cdots,\psi^{(d-1)e-i}
\end{matrix}\right),
\end{align*}
where $\alpha=\frac{d}{a}(\frac{bd}{a(d-1)})^{(d-1)},$ $\chi$ and $\psi$ are characters of orders $d$ and $e(d-1)$ respectively.
We complete the proof by putting the value of $D^{'}$ in \eqref{e3.3}.
\end{proof}
\begin{proof}[\bf{Proof of the Theorem \ref{mt2}}]
If $d\geq2$ and is an odd integer, then Davenport-Hasse relations for $G_{dm}$ and $G_{(d-1)(-m-\frac{(e-1)(q-1)}{e(d-1)})}$
 are given by
\begin{align*}
G_{dm}=\frac{G_mG_{m+\frac{q-1}{d}}G_{m+\frac{2(q-1)}{d}}\cdots G_{m+\frac{(d-1)(q-1)}{d}}}{q^{\frac{d-1}{2}}T^{\frac{(d-1)
(d+1)(q-1)}{8d}}(-1)T^{-m}(d^d)},
\end{align*}
\begin{align*}
G_{(d-1)(-m-\frac{(e-i)(q-1)}{e(d-1)})}=&
\frac{G_{-m-\frac{(e-i)(q-1)}{e(d-1)}}G_{-m-\frac{(2e-i)(q-1)}{e(d-1)}}\cdots G_{-m-\frac{((d-1)e-i)(q-1)}{e(d-1)}}}
{G_{\frac{q-1}{2}}q^{\frac{d-3}{2}}T^{\frac{(d-3)(q-1)}{8}}(-1)T^{m+\frac{(e-i)(q-1)}{e(d-1)}}(d-1)^{(d-1)}},\\
&i=1,2,\cdots,(e-1).
\end{align*}
Using these identities in \eqref{e3.2}, we get
\begin{align*}
D^{'}=&\frac{T^{\frac{(3d-1)(q-1)}{8d}}(-1)}{(q-1)q^{(d-2)}G_{\frac{q-1}{2}}
T^{\frac{(d^2-1)(q-1)}{4d}}(-1)}\sum_{i=1}^{e-1}G_{-\frac{i(q-1)}{e}}T^{\frac{i(q-1)}{e}}(-1)
T^{\frac{(e-i)(q-1)}{e}}\left(\frac{b}{d-1}\right)\\
&\times\sum_{m=0}^{q-2}{\{G_mG_{-m}\}}\{{G_{m+\frac{q-1}{d}}G_{-m-\frac{(e-i)(q-1)}{e(d-1)}}\}}
\{G_{m+\frac{2(q-1)}{d}}G_{-m-\frac{(2e-i)(q-1)}{e(d-1)}}\}\cdots\\
&\times\{G_{m+\frac{(d-1)(q-1)}{2d}}~G_{-m-\frac{(\frac{(d-1)}{2}e-i)(q-1)}{e(d-1)}}\}
\{G_{m+\frac{(d+1)(q-1)}{2d}}G_{-m-\frac{(\frac{d+1}{2}e-i)(q-1)}{e(d-1)}}\}\\
&\times\cdots\{G_{m+\frac{(d-1)(q-1)}{d}}G_{-m-\frac{((d-1)e-i)(q-1)}{e(d-1)}}\}
T^m\left(\frac{b^{(d-1)}d^d}{(d-1)^{(d-1)}a^d}\right).
\end{align*}
Since $d$ is an odd integer, thus $T^{\frac{(d^2-1)(q-1)}{4d}}(-1)=1$.
Next, we eliminate the term $\{G_mG_{-m}\}$, using the fact that if $m=0$, then $G_mG_{-m}=qT^m(-1)-(q-1)$, and if $m\neq0$, 
then $G_mG_{-m}=qT^m(-1)$. Using these identities in the above equation and rearranging the second term, we have
\begin{align*}
 D^{'}
=&\frac{qT^{\frac{(3d-1)(q-1)}{8d}}(-1)}{(q-1)q^{(d-2)}G_{\frac{q-1}{2}}
}\sum_{i=1}^{e-1}G_{-\frac{i(q-1)}{e}}T^{\frac{i(q-1)}{e}}(-1)
T^{\frac{(e-i)(q-1)}{e}}\left(\frac{b}{d-1}\right)\\
&\times\sum_{m=0}^{q-2}{\{G_{m+\frac{q-1}{d}}G_{-m-\frac{(e-i)(q-1)}{e(d-1)}}\}}
\{G_{m+\frac{2(q-1)}{d}}G_{-m-\frac{(2e-i)(q-1)}{e(d-1)}}\}\cdots\\
&\{G_{m+\frac{(d-1)(q-1)}{2d}}~G_{-m-\frac{\left(\frac{d-1}{2}e-i\right)(q-1)}{e(d-1)}}\}
\{G_{m+\frac{(d+1)(q-1)}{2d}}G_{-m-\frac{\left(\frac{d+1}{2}e-i\right)(q-1)}{e(d-1)}}\}\\
&\times\cdots\{G_{m+\frac{(d-1)(q-1)}{d}}G_{-m-\frac{((d-1)e-i)(q-1)}{e(d-1)}}\}
T^m\left(-\frac{b^{(d-1)}d^d}{(d-1)^{(d-1)}a^d}\right)\\
&-\frac{T^{\frac{(3d-1)(q-1)}{8d}}(-1)}{q^{(d-2)}G_{\frac{q-1}{2}}
}\sum_{i=1}^{e-1}G_{-\frac{i(q-1)}{e}}T^{\frac{i(q-1)}{e}}(-1)
T^{\frac{(e-i)(q-1)}{e}}\left(\frac{b}{d-1}\right)\\
&\times\{G_{\frac{q-1}{d}}G_{-\frac{(e-i)(q-1)}{e(d-1)}}\}
\{G_{\frac{2(q-1)}{d}}G_{-\frac{(2e-i)(q-1)}{e(d-1)}}\}\cdots
\{G_{\frac{(d-1)(q-1)}{2d}}~G_{-\frac{\left(\frac{d-1}{2}e-i\right)(q-1)}{e(d-1)}}\}
\\&\times\{G_{\frac{(d+1)(q-1)}{2d}}G_{-\frac{\left(\frac{d+1}{2}e-i\right)(q-1)}{e(d-1)}}\}\cdots
\{G_{\frac{(d-1)(q-1)}{d}}G_{-\frac{((d-1)e-i)(q-1)}{e(d-1)}}\}.
\end{align*}
By using Lemma \ref{l2.4} in each terms of the above equation and collecting the terms of $G_s$ and $T^s$ and $T^{m(d-1)}(-1)=1$ (since $d$ is an odd 
integer), we have
\begin{align*}
D^{'}
=&\frac{q^2T^{\frac{(4d^2+3d-1)(q-1)}{8d}}(-1)}{(q-1)G_{\frac{q-1}{2}}
}\sum_{i=1}^{e-1}G_{-\frac{i(q-1)}{e}}T^{\frac{i(q-1)}{e}}(-1)T^{-\frac{i(q-1)}{e}}(-1)
T^{\frac{-i(q-1)}{e}}\left(\frac{b}{d-1}\right)\\
&\times\{G_{\frac{(id-e)(q-1)}{ed(d-1)}}
G_{\frac{(id-2e)(q-1)}{ed(d-1)}}
G_{\frac{(id-3e)(q-1)}{ed(d-1)}}\cdots
G_{\frac{\left(id-\frac{(d-1)}{2}e \right)(q-1)}{ed(d-1)}}
G_{\frac{\left(id-\frac{(d+1)}{2}e\right)(q-1)}{ed(d-1)}}\\
&\times\cdots G_{\frac{(id-(d-1)e)(q-1)}{ed(d-1)}}\}
\sum_{m=0}^{q-2}{\binom{T^{m+\frac{q-1}{d}}}{T^{m+\frac{(e-i)(q-1)}{e(d-1)}}}}
\binom{T^{m+\frac{2(q-1)}{d}}}{T^{m+\frac{(2e-i)(q-1)}{e(d-1)}}}\cdots\\
 &\times\binom{T^{m+\frac{(d-1)(q-1)}{2d}}}{T^{m+\frac{\left(\frac{d-1}{2}e-i\right)(q-1)}{e(d-1)}}}
\binom{T^{m+\frac{(d+1)(q-1)}{2d}}}{T^{m+\frac{\left(\frac{d+1}{2}e-i\right)(q-1)}{e(d-1)}}}
\cdots\binom{T^{m+\frac{(d-1)(q-1)}{d}}}{T^{m+\frac{((d-1)e-i)(q-1)}{e(d-1)}}}
T^m(-\alpha)\\
&-\frac{T^{\frac{(3d-1)(q-1)}{8d}}(-1)}{q^{(d-2)}G_{\frac{q-1}{2}}}\sum_{i=1}^{e-1}G_{-\frac{i(q-1)}{e}}T^{\frac{i(q-1)}{e}}(-1)
T^{\frac{-i(q-1)}{e}}\left(\frac{b}{d-1}\right)N_i,
\end{align*}
where
\begin{align*}
N_i=&\{G_{\frac{q-1}{d}}G_{-\frac{(e-i)(q-1)}{e(d-1)}}\}
\{G_{\frac{2(q-1)}{d}}G_{-\frac{(2e-i)(q-1)}{e(d-1)}}\}\cdots
\{G_{\frac{(d-1)(q-1)}{2d}}~G_{-\frac{\left(\frac{d-1}{2}e-i\right)(q-1)}{e(d-1)}}\}\\
&\times\{G_{\frac{(d+1)(q-1)}{2d}}G_{-\frac{\left(\frac{d+1}{2}e-i\right)(q-1)}{e(d-1)}}\}\cdots
\{G_{\frac{(d-1)(q-1)}{d}}G_{-\frac{((d-1)e-i)(q-1)}{e(d-1)}}\},\\
&i=1,2,\cdots,(e-1),
\end{align*}
and $\alpha=\left(\frac{d}{a}\right)\left(\frac{bd}{a(d-1)}\right)^{(d-1)}$.
Replace $m+\frac{(e-i)(q-1)}{e(d-1)}$ by $m$ in the above equation, we have
\begin{align*}
D^{'}=&q^2\frac{T^{\frac{(4d^2+3d-1)(q-1)}{8d}}(-1)}{(q-1)G_{\frac{q-1}{2}}}\sum_{i=1}^{e-1}G_{-\frac{i(q-1)}{e}}
T^{\frac{-i(q-1)}{e}}\left(\frac{b}{d-1}\right)M_i\sum_{m=0}^{q-2}\binom{T^{m+\frac{(id-e)(q-1)}{ed(d-1)}}}{T^m}\\
&\times\binom{T^{m+\frac{(id+(d-2)e)(q-1)}{ed(d-1)}}}{T^{m+\frac{e(q-1)}{e(d-1)}}}\cdots
\binom{T^{m+\frac{(ed^2-4ed+2id+e)(q-1)}{2ed(d-1)}}}{T^{m+\frac{(d-3)e(q-1)}{2e(d-1)}}}
\binom{T^{m+\frac{(ed^2-2ed+2id-e)(q-1)}{2ed(d-1)}}}{T^{m+\frac{(d-1)e(q-1)}{2e(d-1)}}}\\
&\times\cdots\binom{T^{m+\frac{(ed^2-3ed+id+e)(q-1)}{ed(d-1)}}}{T^{m+\frac{(d-2)e(q-1)}{e(d-1)}}}
T^m(-\alpha)T^{-\frac{(e-i)(q-1)}{e(d-1)}}(-\alpha)\\
&-\frac{T^{\frac{(3d-1)(q-1)}{8d}}(-1)}{q^{(d-2)}G_{\frac{q-1}{2}}}\sum_{i=1}^{e-1}G_{-\frac{i(q-1)}{e}}T^{\frac{i(q-1)}{e}}(-1)
T^{\frac{-i(q-1)}{e}}\left(\frac{b}{d-1}\right)N_i,
\end{align*}
where $M_i=\{G_{\frac{(id-e)(q-1)}{ed(d-1)}}G_{\frac{(id-2e)(q-1)}{ed(d-1)}}
G_{\frac{(id-3e)(q-1)}{ed(d-1)}}\cdots
G_{\frac{\left(id-\frac{d-1}{2}e\right)(q-1)}{ed(d-1)}}G_{\frac{\left(id-\frac{d+1}{2}e \right)(q-1)}{ed(d-1)}}\\
\times G_{\frac{\left(id-\frac{d+3}{2}e\right)(q-1)}{ed(d-1)}}\cdots G_{\frac{(id-(d-1)e)(q-1)}{ed(d-1)}}\}, i=1,2,\cdots,(e-1).$\\ 
By using Definition \ref{d2.11} in the above equation, we have
\begin{align*}
D^{'}=&-\frac{T^{\frac{(3d-1)(q-1)}{8d}}(-1)}{q^{(d-2)}G_{\frac{q-1}{2}}}\sum_{i=1}^{e-1}N_i
G_{-\frac{i(q-1)}{e}}T^{\frac{i(q-1)}{e}}(-1)
T^{\frac{-i(q-1)}{e}}\left(\frac{b}{d-1}\right)\\
&+q\frac{T^{\frac{(4d^2+3d-1)(q-1)}{8d}}(-1)}{G_{\frac{q-1}{2}}}\sum_{i=1}^{e-1}G_{-\frac{i(q-1)}{e}}
T^{\frac{-i(q-1)}{e}}\left(\frac{b}{d-1}\right)M_iT^{-\frac{(e-i)(q-1)}{e(d-1)}}(-\alpha)
\end{align*}
\begin{align*}
\times_{d-1}F_{d-2}
\left(
\begin{matrix}
\eta^{id-e},&\eta^{id+ed-2e},\cdots,\eta^{\frac{ed^2-4ed+2id+e}{2}},&\eta^{\frac{ed^2-2ed+2id-e}{2}},\cdots,\eta^{ed^2-3ed+id+e}\\
&&&|-\alpha\\
&\psi^{e},\cdots,\psi^{\frac{d-3}{2}e},&\psi^{\frac{d-1}{2}e},\cdots,\psi^{(d-2)e}
\end{matrix}\right),
\end{align*}
where $\eta$, $\psi$ are characters of orders 
$ed(d-1)$ and $e(d-1)$ respectively.
We complete the proof by putting the value of $D^{'}$ in equation \eqref{e3.3}. Note that if $e=2$, then 
$N_1$ becomes $q^{d-1}T^{-\frac{(d-1)(q-1)}{8d}}(-1)$ and $M_1=q^{\frac{d-1}{2}}T^{-\frac{(d-1)(q-1)}{8d}(-1)}$.
\end{proof}
\begin{rem}
Let $\pi\in\mathbb{F}_p^*$ be of the order $e$.
If $e=d$ and $p\equiv1~(\mathrm{mod}~e)$, then there are $e$ points at infinity, namely
$[1:1:0],[1:\pi:0],[1:\pi^2:0],\cdots,[1:\pi^{e-1}:0]$.
Again if $e=d$ and $p\not\equiv1~(\mathrm{mod}~e)$, then the point at infinity is only $[1:1:0]$.
Now if $e\neq d$ for $e<d$, the point at infinity is only $[0:1:0]$ and for $e>d$, then the point at infinity is only
$[1:0:0]$.
\end{rem}
\begin{rem}
 Let $q, e, d$ and $E_{e,d}$ be as in Theorems \ref{mt1} and \ref{mt2}. Let $a_q(E_{e,d}(\mathbb{F}_q))$ denotes
 the trace of Frobenius of the algebraic curve $E_{e,d}$. Since $a_q(E_{e,d}(\mathbb{F}_q))=q-N_{e,d}$,
 from Theorems \ref{mt1} and \ref{mt2}, we can express the trace of Frobenius of the algebraic curve $E_{e,d}$
 in terms of $_dF_{d-1}$ and $_{d-1}F_{d-2}$  Gaussian hypergeometric series containing multiplicative characters
 of orders $d$, $e(d-1)$ and $ed(d-1)$.
\end{rem}
\section{Applications}
In the following example, we deduce Theorem 2.1 of Lennon \cite{lennon2011} from Theorem \ref{mt2}.
\begin{exa}\cite{lennon2011}\label{ex1}
Let $q=p^n,p>3$ a prime and $q\equiv1(\mathrm{mod~12})$, Let $E_{2,3}:y^2=x^3+ax+b$ be an elliptic curve over $\mathbb{F}_q$
with $j(E_{2,3})\neq0,1728$, then the trace of Frobenius map on $E_{2,3}$ can be expressed as\\
\begin{align*}
a_q(E_{2,3})=-qT^{\frac{q-1}{4}}\left(\frac{a^3}{27}\right)
{_2}F_1
\left(
\begin{matrix}
T^{\frac{q-1}{12}},&T^{\frac{5(q-1)}{12}}\\
&&&|-\frac{27b^2}{4a^3}\\
&T^{\frac{q-1}{2}}
\end{matrix}\right).
\end{align*}
Let $e=2$ and $d=3$ in Theorem \ref{mt2}.
In this case, we have $M_1=qT^{-\frac{q-1}{12}}(-1)$, $N_1=q^2T^{-\frac{q-1}{12}}(-1)$ and\\
\begin{align*}
N_{2,3}=&q+\phi(b)-T^{-\frac{q-1}{12}}(-1)\phi(-2b)T^{\frac{q-1}{3}}(-1)+qT^{\frac{11(q-1)}{6}}(-1)\phi(2b)T^{-\frac{q-1}{12}}(-1) T^{\frac{q-1}{4}}\left(-\frac{27b^2}{4a^3}\right)\\
&\times{_2}F_1\left(\begin{matrix}
\eta,&\eta^5\\
&&\mid-\frac{27b^2}{4a^3}\\
&\psi^2\\
\end{matrix}
\right),
\end{align*}
where $\eta$ is a multiplicative character of order $12$ and $\psi$ is multiplicative character of order $4$. Thus, we have\\
\begin{align*}
N_{2,3}=q+\phi(b)-\phi(-2b)T^{\frac{q-1}{4}}(-1)+qT^{\frac{q-1}{4}}\left(\frac{a^3}{27}\right)
{_2}F_1\left(
\begin{matrix}
T^{\frac{q-1}{12}},&T^{\frac{5(q-1)}{12}}\\
&&\mid-\frac{27b^2}{4a^3}\\
&\phi\\
\end{matrix}
\right).
\end{align*}
Since $q\equiv1\pmod{12}$, therefore, $\phi(2)=T^{\frac{q-1}{4}}(-1)$ and $\phi(-1)=1$. Thus, $\phi(2b)T^{\frac{q-1}{4}}(-1)=\phi(b)$.
Hence, we have
\begin{align*}
N_{2,3}=q+qT^{\frac{q-1}{4}}\left(\frac{a^3}{27}\right)
{_2}F_1\left(
\begin{matrix}
T^{\frac{q-1}{12}},&T^{\frac{5(q-1)}{12}}\\
&&\mid-\frac{27b^2}{4a^3}\\
&\phi\\
\end{matrix}
\right).
\end{align*}
Since $a_q(E_{2,3})=q-N_{2,3}$, we have
\begin{align*}
a_q(E_{2,3})=-qT^{\frac{q-1}{4}}\left(\frac{a^3}{27}
\right)
{_2}F_1\left(
\begin{matrix}
T^{\frac{q-1}{12}},&T^{\frac{5(q-1)}{12}}\\
&&\mid-\frac{27b^2}{4a^3}\\
&\phi\\
\end{matrix}
\right).
\end{align*}
\end{exa}
We now give an example to show how Theorem \ref{mt2} is applied for specific values of $e$ and $d$.
\begin{exa}
If $q\equiv1(\mathrm{mod~36})$ and $E_{3,4}$ is an algebraic curve over $\mathbb{F}_q$, then
the trace of Frobenius map on $E_{3,4}$ can be expressed as\\
\begin{align*}
a_{q}(E_{3,4}(\mathbb{F}_q))=&-T^{-\frac{q-1}{3}}(b)-T^{-\frac{2(q-1)}{3}}(b)-q^3\binom{T^{\frac{4(q-1)}{9}}}{T^{\frac{q-1}{3}}}
\binom{T^{\frac{q-1}{36}}}{T^{\frac{5(q-1)}{36}}}T^{\frac{q-1}{3}}\left(\frac{3}{b}\right)\\
&\times{_4}F_3
\left(
\begin{matrix}
\phi,&\varepsilon,&T^{\frac{q-1}{4}},&T^{\frac{3(q-1)}{4}}\\
&&&&\mid-\frac{256b^3}{27a^4}\\
&T^{\frac{5(q-1)}{9}},&T^{\frac{2(q-1)}{9}},&T^{\frac{8(q-1)}{9}}\\
\end{matrix}
\right)-q^3\binom{T^{\frac{5(q-1)}{9}}}{T^{\frac{2(q-1)}{3}}}
\binom{T^{\frac{5(q-1)}{36}}}{T^{\frac{q-1}{36}}}\\
&\times T^{\frac{2(q-1)}{9}}(-1)T^{\frac{2(q-1)}{3}}\left(\frac{3}{b}\right)
{_4}F_3
\left(
\begin{matrix}
\phi,&\varepsilon,&T^{\frac{q-1}{4}},&T^{\frac{3(q-1)}{4}}\\
&&&&\mid-\frac{256b^3}{27a^4}\\
&T^{\frac{4(q-1)}{9}},&T^{\frac{q-1}{9}},&T^{\frac{7(q-1)}{9}}\\
\end{matrix}
\right).
\end{align*}
Let $e=3$ and $d=4$ in Theorem \ref{mt1}. In this case, we have
\begin{align*}
N_{3,4}=q+\sum_{i=1}^{2}T^{-\frac{i(q-1)}{3}}(b)+T^{-\frac{7(q-1)}{12}}(-1)\sum_{i=1}^{2}M_iT^{\frac{i(q-1)}{3}}
\left(\frac{3}{b}\right)
\end{align*}
\begin{align}
\times{_4}F_3
\left(
\begin{matrix}
\phi,&\varepsilon,&\chi,&\chi^3\\
&&&&\mid-\frac{256b^3}{27a^4}\\
&\psi^{6-i},&\psi^{3-i},&\psi^{9-i}\\
\end{matrix}
\right),\label{e2}
\end{align}
where $\chi$ and $\psi$ are characters of orders $4$ and $9$ respectively, and
\begin{align*}
M_i=\left\{ G_{-\frac{i(q-1)}{3}}G_{-\frac{(6-i)(q-1)}{9}}G_{\frac{(4i-3)(q-1)}{36}}G_{\frac{(4i-9)(q-1)}{36}}\right\},
i=1,2.
\end{align*}
Using Lemmas \ref{l2.3} and \ref{l2.4}, we find the values of $M_i$ for $i=1$,
\begin{align*}
M_1=&q^3\binom{T^{\frac{4(q-1)}{9}}}{T^{\frac{q-1}{3}}}\binom{T^{\frac{q-1}{36}}}{T^{\frac{5(q-1)}{36}}}
T^{\frac{7(q-1)}{12}}(-1),
\end{align*}
and for $i=2$,
\begin{align*}
M_2=&q^3\binom{T^{\frac{5(q-1)}{9}}}{T^{\frac{2(q-1)}{3}}}\binom{T^{\frac{5(q-1)}{36}}}{T^{\frac{q-1}{36}}}
T^{\frac{29(q-1)}{36}}(-1).\\
\end{align*}
By putting the value of $M_i$ in equation \eqref{e2}, we have
\begin{align*}
N_{3,4}=&q+T^{-\frac{q-1}{3}}(b)+T^{-\frac{2(q-1)}{3}}(b)+q^3\binom{T^{\frac{4(q-1)}{9}}}{T^{\frac{q-1}{3}}}
\binom{T^{\frac{q-1}{36}}}{T^{\frac{5(q-1)}{36}}}T^{\frac{q-1}{3}}\left(\frac{3}{b}\right)\\
&\times{_4}F_3
\left(
\begin{matrix}
\phi,&\varepsilon,&T^{\frac{q-1}{4}},&T^{\frac{3(q-1)}{4}}\\
&&&&\mid-\frac{256b^3}{27a^4}\\
&T^{\frac{5(q-1)}{9}},&T^{\frac{2(q-1)}{9}},&T^{\frac{8(q-1)}{9}}\\
\end{matrix}
\right)+q^3\binom{T^{\frac{5(q-1)}{9}}}{T^{\frac{2(q-1)}{3}}}
\binom{T^{\frac{5(q-1)}{36}}}{T^{\frac{q-1}{36}}}\\
&\times T^{\frac{2(q-1)}{9}}(-1)T^{\frac{2(q-1)}{3}}\left(\frac{3}{b}\right)
{_4}F_3
\left(
\begin{matrix}
\phi,&\varepsilon,&T^{\frac{q-1}{4}},&T^{\frac{3(q-1)}{4}}\\
&&&&\mid-\frac{256b^3}{27a^4}\\
&T^{\frac{4(q-1)}{9}},&T^{\frac{q-1}{9}},&T^{\frac{7(q-1)}{9}}\\
\end{matrix}
\right).
\end{align*}
Since $a_q(E_{3,4})=q-N_{3,4}$, thus
\begin{align*}
a_{q}(E_{3,4}(\mathbb{F}_q))=&-T^{-\frac{q-1}{3}}(b)-T^{-\frac{2(q-1)}{3}}(b)-q^3\binom{T^{\frac{4(q-1)}{9}}}{T^{\frac{q-1}{3}}}
\binom{T^{\frac{q-1}{36}}}{T^{\frac{5(q-1)}{36}}}T^{\frac{q-1}{3}}\left(\frac{3}{b}\right)\\
&\times{_4}F_3
\left(
\begin{matrix}
\phi,&\varepsilon,&T^{\frac{q-1}{4}},&T^{\frac{3(q-1)}{4}}\\
&&&&\mid-\frac{256b^3}{27a^4}\\
&T^{\frac{5(q-1)}{9}},&T^{\frac{2(q-1)}{9}},&T^{\frac{8(q-1)}{9}}\\
\end{matrix}
\right)-q^3\binom{T^{\frac{5(q-1)}{9}}}{T^{\frac{2(q-1)}{3}}}
\binom{T^{\frac{5(q-1)}{36}}}{T^{\frac{q-1}{36}}}\\
&\times T^{\frac{2(q-1)}{9}}(-1)T^{\frac{2(q-1)}{3}}\left(\frac{3}{b}\right)
{_4}F_3
\left(
\begin{matrix}
\phi,&\varepsilon,&T^{\frac{q-1}{4}},&T^{\frac{3(q-1)}{4}}\\
&&&&\mid-\frac{256b^3}{27a^4}\\
&T^{\frac{4(q-1)}{9}},&T^{\frac{q-1}{9}},&T^{\frac{7(q-1)}{9}}\\
\end{matrix}
\right).
\end{align*}
\end{exa}
We recall the following theorems from \cite{barman2013gautam,greene1987hypergeometric}.
\begin{thm}\cite{barman2013gautam}\label{t4.1}
Let $q=p^n$, $p>0$ an odd prime and let $T$ be a generator of the character group $\mathbb{\widehat F}_q^*$. The number of points
on the twisted Edward curve $C_{\alpha,\beta}:\alpha x^2+y^2=1+\beta x^2y^2$ over $\mathbb{F}_q$ can be expressed as
\begin{align*}
\#C_{\alpha,\beta}(\mathbb{F}_q)=q-1-\phi(\beta)-\phi(\alpha\beta)+q\phi(-\alpha)
{_2}F_1
\left(
\begin{matrix}
\phi,&\phi\\
&&\mid\frac{\beta}{\alpha}\\
&\varepsilon\\
\end{matrix}
\right).
\end{align*}
\end{thm}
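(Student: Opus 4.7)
The plan is to solve the curve equation for $y^2$ and reduce the point count to a character sum in $x$, then identify that sum with Greene's ${}_2F_1$. Rewriting $\alpha x^2 + y^2 = 1 + \beta x^2 y^2$ as $y^2(1-\beta x^2) = 1 - \alpha x^2$, the number of $y \in \mathbb{F}_q$ solving this for a given $x$ with $1 - \beta x^2 \ne 0$ is $1 + \phi(1-\alpha x^2)\phi(1-\beta x^2)$, using the convention $\phi(0) = 0$ together with $\phi(c)^2 = 1$ for $c \ne 0$ (so $\phi((1-\alpha x^2)/(1-\beta x^2)) = \phi(1-\alpha x^2)\phi(1-\beta x^2)$). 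The degenerate locus $1 - \beta x^2 = 0$ contributes nothing when $\alpha \ne \beta$, since it then forces $1 - \alpha x^2 = 0$ as well, with no simultaneous solution.

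First I would count $\#\{x : 1 - \beta x^2 \ne 0\} = q - 1 - \phi(\beta)$ (since $x^2 = 1/\beta$ has $1 + \phi(\beta)$ solutions), and combine with the character contribution to obtain
\[
\#C_{\alpha,\beta}(\mathbb{F}_q) = q - 1 - \phi(\beta) + \sum_{x \in \mathbb{F}_q} \phi(1 - \alpha x^2)\phi(1-\beta x^2),
\]
where the $\phi(0)=0$ convention automatically suppresses the boundary $x$'s. Next I would fold the square via $\sum_x f(x^2) = \sum_t (1 + \phi(t)) f(t)$, splitting the sum as $S_1 + S_2$ with $S_1 = \sum_t \phi(1-\alpha t)\phi(1-\beta t)$ and $S_2 = \sum_t \phi(t)\phi(1-\alpha t)\phi(1-\beta t)$.

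For $S_2$, the substitution $t = y/\alpha$ gives $\phi(\alpha)\sum_y \phi(y)\phi(1-y)\phi(1-(\beta/\alpha)y)$, which by Greene's integral representation of ${}_2F_1$ equals $q\phi(-\alpha) \cdot {_2F_1}\!\begin{pmatrix}\phi, \phi \\ \epsilon\end{pmatrix}(\beta/\alpha)$. For $S_1$, after the substitution $u = 1 - \alpha t$ and a linear scaling absorbing $\phi(c^2)=1$, the sum reduces to the classical evaluation $\sum_w \phi(w)\phi(w-1) = \phi(-1) J(\phi, \phi) = -1$ (using $J(\phi,\phi) = -\phi(-1)$), yielding $S_1 = -\phi(\alpha\beta)$. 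Assembling the three pieces produces the stated formula.

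The main obstacle is justifying the identification in $S_2$, since Definition~\ref{d2.11} gives ${}_2F_1$ only in binomial-coefficient form. I would either invoke Greene's integral representation as an external result, or derive it internally by expanding both $\phi(1-y)$ and $\phi(1-xy)$ via the identity $A(1+x) = \delta(x) + \frac{q}{q-1}\sum_\chi \binom{A}{\chi}\chi(x)$, collapsing the double character sum through Lemma~\ref{l2.6}(1) (which forces the characters to combine to $\epsilon$), and then reconciling $\binom{\phi}{\phi\overline{\chi}}$ with $\binom{\phi\chi}{\chi}$ via the symmetry identities for $\binom{A}{B}$ recorded in the preliminaries. The remaining bookkeeping of the $\phi$-prefactors under the variable changes is routine.
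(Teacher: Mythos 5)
The paper does not actually prove this statement: Theorem \ref{t4.1} is recalled verbatim from \cite{barman2013gautam} with no argument supplied, so there is no internal proof to compare against. Your derivation is correct and is essentially the standard one (and, as far as the stated identity goes, the one in the cited source): writing $y^2(1-\beta x^2)=1-\alpha x^2$, counting fibers by $1+\phi\bigl((1-\alpha x^2)(1-\beta x^2)\bigr)$ under the convention $\phi(0)=0$, folding $x\mapsto x^2$ to get $S_1+S_2$, evaluating $S_1=\phi(\alpha\beta)\,\phi(-1)J(\phi,\phi)=-\phi(\alpha\beta)$, and recognizing $S_2=\phi(\alpha)\sum_y\phi(y)\phi(1-y)\phi(1-(\beta/\alpha)y)=q\phi(-\alpha)\,{}_2F_1\!\left(\begin{smallmatrix}\phi, & \phi\\ & \epsilon\end{smallmatrix}\Big|\,\tfrac{\beta}{\alpha}\right)$ via Greene's integral representation; I checked each of these evaluations and they assemble to the stated formula. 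Two points worth making explicit: (i) the argument needs $\alpha\beta(\alpha-\beta)\neq 0$ (nonzero for the substitutions $t=y/\alpha$ and $u\mapsto w$, and $\alpha\neq\beta$ so that the locus $1-\beta x^2=0$ contributes no points), hypotheses the theorem statement leaves implicit but which are standard for twisted Edwards curves; (ii) the integral representation is not among the paper's preliminaries, and your proposed internal derivation of it --- expanding $\phi(1-y)$ and $\phi(1-xy)$ by $A(1+z)=\delta(z)+\frac{q}{q-1}\sum_\chi\binom{A}{\chi}\chi(z)$, collapsing via Lemma \ref{l2.6}, and using the symmetries of $\binom{A}{B}$ to match Definition \ref{d2.11} --- is the right way to close that gap, though it is the one step you sketch rather than carry out.
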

\begin{thm}\cite{greene1987hypergeometric}
 If $A,B$ and $C$ are characters of $\mathbb{F}_q$, then
\begin{align*} 
 {_2}F_1
\left(
\begin{matrix}
A,&\overline{A}\\
&&\mid\frac{1}{2}\\
&\overline{A}B\\
\end{matrix}
\right)=A(-2)
\begin{cases}
0\quad\quad\quad\quad\quad\mbox{if}~B~\mbox{is not square}\\
\binom{C}{A}+\binom{\phi C}{A} \quad\mbox{if}~B=C^2.\\
\end{cases}
\end{align*}
\end{thm}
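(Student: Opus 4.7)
The plan is to reduce the hypergeometric sum to a combination of Jacobi sums via Greene's integral representation and then detect whether $B$ is a square through a quadratic substitution in the variable of summation. Concretely, the first step is to apply Greene's integral representation of $_2F_1$ at $x = 1/2$ with the parameters $(A, \overline{A}; \overline{A}B)$. Exploiting the cancellation $A\overline{A} = \epsilon$ in the middle factor, using $\overline{A}(1 - y/2) = A(2)\overline{A}(2 - y)$, and substituting $y \mapsto 1 - z$, the expression collapses to
\begin{equation*}
\frac{A(2)\,B(-1)}{q}\sum_{z \in \mathbb{F}_q} B(z)\,\overline{A}(1 - z^2),
\end{equation*}
where I use the factorization $\overline{A}(1 - z)\overline{A}(1 + z) = \overline{A}(1 - z^2)$.

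The second step is to pair the contributions of $z$ and $-z$ and then perform the quadratic substitution $w = z^2$. The pair $\{z, -z\}$ contributes $(1 + B(-1))\,B(z)\,\overline{A}(1 - z^2)$, which vanishes unless $B(-1) = 1$; by the cyclic structure of $\widehat{\mathbb{F}_q^*}$ this is equivalent to $B$ being a square character, yielding at once the vanishing half of the statement. When $B = C^2$, one has $B(z) = C(z^2) = C(w)$, and the map $z \mapsto z^2$ is two-to-one from $\mathbb{F}_q^*$ onto the set of nonzero squares. Writing the indicator of squares as $\tfrac{1}{2}(1 + \phi(w))$ then splits the resulting sum into the two Jacobi sums $J(C, \overline{A})$ and $J(\phi C, \overline{A})$.

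The third step is to convert each Jacobi sum into a binomial coefficient via the defining relation $\binom{X}{A} = \frac{A(-1)}{q}J(X, \overline{A})$ from the Preliminaries and to collect the surviving constants. Since $B = C^2$ forces $B(-1) = C(-1)^2 = 1$, the accumulated prefactor reduces to $A(2)A(-1) = A(-2)$, producing $A(-2)\bigl[\binom{C}{A} + \binom{\phi C}{A}\bigr]$ exactly as claimed.

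The main obstacle I anticipate is bookkeeping of the small boundary contributions near $z = 0$ and $z = \pm 1$, where certain characters evaluate to zero and the pairing argument requires a separate check, together with degenerate subcases in which $A$, $C$, or $\phi C$ is trivial and the Jacobi-to-binomial conversion needs minor adjustment. The essential content of the identity lies in the quadratic split $\overline{A}(1-z)\overline{A}(1+z) = \overline{A}(1-z^2)$ followed by the substitution $w = z^2$; what remains is careful tracking of character values at $-1$ and $2$.
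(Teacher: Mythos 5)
The paper offers no proof of this statement: it is quoted verbatim from Greene \cite{greene1987hypergeometric} (it is Greene's evaluation of $_2F_1$ at $x=\tfrac12$) and is used only through its special case in Corollary \ref{c4.1}. So there is no in-paper argument to compare against; judged on its own, your proof is correct and is in fact essentially Greene's original derivation. The computation checks out: the integral representation with parameters $(A,\overline{A};\overline{A}B)$ has prefactor $\overline{A}(-1)\,\overline{A}B(-1)=B(-1)$, the substitution $y\mapsto 1-z$ together with $\overline{A}(y)\overline{A}(1-y/2)=A(2)\overline{A}(y(2-y))$ gives $\frac{A(2)B(-1)}{q}\sum_z B(z)\overline{A}(1-z^2)$, the $z\leftrightarrow -z$ pairing kills the sum exactly when $B(-1)=-1$ (equivalently, for odd $q$, when $B$ is a nonsquare in the cyclic group $\widehat{\mathbb{F}_q^*}$), and when $B=C^2$ the indicator $\tfrac12(1+\phi(w))$ of squares splits the sum into $J(C,\overline{A})+J(\phi C,\overline{A})$, which the paper's definition $\binom{X}{A}=\frac{A(-1)}{q}J(X,\overline{A})$ converts to $qA(-1)\bigl[\binom{C}{A}+\binom{\phi C}{A}\bigr]$; since $B(-1)=C(-1)^2=1$, the constants collapse to $A(-2)$. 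The only point deserving an explicit citation rather than a hand-wave is your very first step: the paper defines $_2F_1$ by the character sum (Definition \ref{d2.11}), so you need Greene's Theorem 3.6 identifying that definition with the integral representation, and that identification (like the theorem itself) has degenerate exceptions when some of the parameters are trivial or coincide -- you flag these boundary cases but do not resolve them, which is acceptable since the quoted statement itself glosses over them.
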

We have the following special case of the above theorem.
\begin{corollary}\label{c4.1}
 If $A=\phi$, $B=\phi$ and $q\equiv1 (\mathrm{mod}~4)$, then we have
 \begin{align*} 
 {_2}F_1
\left(
\begin{matrix}
\phi,&\phi\\
&&\mid\frac{1}{2}\\
&\varepsilon\\
\end{matrix}
\right)=\phi(-2)\left[\binom{T^{\frac{q-1}{4}}}{\phi}+\binom{T^{\frac{3(q-1)}{4}}}{\phi}\right].
 \end{align*}
\end{corollary}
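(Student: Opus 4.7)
The plan is to derive Corollary \ref{c4.1} as an immediate specialization of the preceding theorem (Greene's evaluation at $x=\tfrac{1}{2}$), so the real content of the proof is just checking that the hypotheses of that theorem are satisfied and that the two displayed expressions match after substitution.

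First I would take $A=\phi$ and $B=\phi$ in the theorem. Since $\phi$ is the quadratic character, $\overline{A}=\overline{\phi}=\phi$, so the top row of the $_2F_1$ becomes $(\phi,\phi)$, and the bottom entry becomes $\overline{A}B=\phi\cdot\phi=\varepsilon$, matching the left-hand side of the corollary exactly. The theorem then tells us that the value is $A(-2)=\phi(-2)$ times either $0$ or $\binom{C}{A}+\binom{\phi C}{A}$, according to whether $B$ is or is not the square of some character $C\in\widehat{\mathbb{F}_q^*}$.

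Next I would use the hypothesis $q\equiv 1\pmod 4$ to show that $B=\phi$ is indeed a square in the character group. Writing $\phi=T^{(q-1)/2}$ and noting that $4\mid q-1$, one may take $C=T^{(q-1)/4}$, since then $C^2=T^{(q-1)/2}=\phi=B$. A second square root is $-C=\phi C=T^{(q-1)/2}\cdot T^{(q-1)/4}=T^{3(q-1)/4}$, so the two characters appearing in the formula $\binom{C}{A}+\binom{\phi C}{A}$ are precisely $T^{(q-1)/4}$ and $T^{3(q-1)/4}$.

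Finally I would substitute $A=\phi$, $C=T^{(q-1)/4}$, and $\phi C=T^{3(q-1)/4}$ into the branch $\binom{C}{A}+\binom{\phi C}{A}$ of the theorem, arriving at
\[
{}_2F_1\!\left(\begin{matrix}\phi,&\phi\\ &\varepsilon\end{matrix}\Big|\,\tfrac{1}{2}\right)=\phi(-2)\left[\binom{T^{(q-1)/4}}{\phi}+\binom{T^{3(q-1)/4}}{\phi}\right],
\]
which is exactly the claim. There is no genuine obstacle here; the only point that requires the congruence condition is the existence of a fourth root of unity in $\widehat{\mathbb{F}_q^*}$, needed to realize $\phi$ as a square character, and this is guaranteed by $q\equiv 1\pmod 4$.
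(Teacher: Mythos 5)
Your proposal is correct and is precisely the intended derivation: the paper states Corollary \ref{c4.1} as an immediate special case of Greene's theorem, and your verification that $A=B=\phi$ gives top row $(\phi,\phi)$, bottom entry $\overline{A}B=\varepsilon$, and that $q\equiv1\pmod 4$ furnishes the square root $C=T^{(q-1)/4}$ with $\phi C=T^{3(q-1)/4}$ is exactly the substitution the paper relies on. No issues.
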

\begin{thm}\label{t4.2}
Let $q=p^n$, $p>0$ an odd prime and $q\equiv1 (\mathrm{mod}~12)$.
If $a,b,k\in\mathbb{F}_q^*$ and $3k+a=0$, then the number of points on the elliptic curve 
$E_{a,b,0}:y^2=x^3+ax^2+bx$ can be expressed as 
\begin{align*}
\#E_{a,b,0}=q+qT^{\frac{3(q-1)}{4}}\left(\frac{3k^2+2ak+b}{3}\right)
{_2}F_1
\left(
\begin{matrix}
T^{\frac{q-1}{12}},&T^{\frac{5(q-1)}{12}}\\
&&\mid-\frac{27(k^3+ak^2+bk)^2}{4(3k^2+2ak+b)^3}\\
&\phi\\
\end{matrix}
\right).
\end{align*}
\end{thm}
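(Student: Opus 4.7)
The plan is to reduce the curve $E_{a,b,0}:y^2=x^3+ax^2+bx$ to short Weierstrass form by the classical depression substitution, and then invoke Lennon's formula as recorded in Example \ref{ex1}.

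First, I substitute $x=u+k$ with $k\in\mathbb{F}_q^*$. Expanding gives
\begin{align*}
y^2=u^3+(3k+a)u^2+(3k^2+2ak+b)u+(k^3+ak^2+bk).
\end{align*}
Under the hypothesis $3k+a=0$, the quadratic term vanishes, and we obtain the curve
\begin{align*}
\widetilde{E}:\ y^2=u^3+Au+B,\qquad A=3k^2+2ak+b,\ \ B=k^3+ak^2+bk.
\end{align*}
Since $x\mapsto x-k$ is a bijection of $\mathbb{F}_q$, every solution of $E_{a,b,0}$ in $\mathbb{F}_q^2$ corresponds to exactly one solution of $\widetilde{E}$ in $\mathbb{F}_q^2$, so $\#E_{a,b,0}(\mathbb{F}_q)=\#\widetilde{E}(\mathbb{F}_q)$.

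Next, I apply Example \ref{ex1} to the elliptic curve $\widetilde{E}$. Since $a_q(\widetilde{E})=q-\#\widetilde{E}(\mathbb{F}_q)$, Lennon's formula gives
\begin{align*}
\#\widetilde{E}(\mathbb{F}_q)=q+qT^{\frac{q-1}{4}}\!\left(\frac{A^3}{27}\right)
{_2}F_1\!\left(
\begin{matrix}
T^{\frac{q-1}{12}},&T^{\frac{5(q-1)}{12}}\\
&\phi
\end{matrix}\ \Big|\ -\frac{27B^2}{4A^3}\right).
\end{align*}

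Finally, using multiplicativity of characters together with $T^{\frac{q-1}{4}}(A^3)=T^{\frac{3(q-1)}{4}}(A)$ and $T^{\frac{q-1}{4}}(27^{-1})=T^{-\frac{3(q-1)}{4}}(3)$, we rewrite
\begin{align*}
T^{\frac{q-1}{4}}\!\left(\frac{A^3}{27}\right)=T^{\frac{3(q-1)}{4}}\!\left(\frac{A}{3}\right)=T^{\frac{3(q-1)}{4}}\!\left(\frac{3k^2+2ak+b}{3}\right),
\end{align*}
and substituting $A,B$ into $-27B^2/(4A^3)$ produces the argument $-\tfrac{27(k^3+ak^2+bk)^2}{4(3k^2+2ak+b)^3}$ displayed in the statement.

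The calculation is essentially routine; the only real subtlety is that to legitimately invoke Example \ref{ex1} we must ensure the reduced curve $\widetilde{E}$ satisfies $j(\widetilde{E})\neq 0,1728$, equivalently $A\neq 0$ and $B\neq 0$. The condition $B=k(k^2+ak+b)=0$ would force $k^2+ak+b=0$ (since $k\neq 0$), while $A=3k^2+2ak+b=0$ combined with $3k+a=0$ yields another constraint on $(a,b,k)$; these degenerate cases must be tacitly excluded as in Example \ref{ex1}. Once these non-degeneracy conditions are in place, the two-line computation above completes the proof.
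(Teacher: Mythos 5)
Your proof is correct and follows essentially the same route as the paper: the translation $x\mapsto x+k$ with $3k+a=0$ kills the quadratic term, the point count is preserved, and Example \ref{ex1} is applied to the resulting short Weierstrass form with $a'=3k^2+2ak+b$, $b'=k^3+ak^2+bk$. Your closing remark about needing $j(\widetilde{E})\neq 0,1728$ (i.e.\ $A,B\neq 0$) to invoke Lennon's formula is a legitimate hypothesis that the paper leaves implicit, so if anything your write-up is slightly more careful than the original.
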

\begin{proof}
Since $a\neq0$, we find $k\in\mathbb{F}_q^*$ such that $3k+a=0$. A change of variables $(x,y)\mapsto(x+k,y)$ takes 
the algebraic curve $E_{a,b,0}$ to birationally equivalent form $E^\prime_{2,3}:y^2=x^3+(3k^2+2ak+b)x+(k^3+ak^2+bk)$.
Clearly $\#E_{a,b,0}=\#E^\prime_{2,3}$, using Example \ref{ex1} for $a^\prime=3k^2+2ak+b$ and $b^\prime=k^3+ak^2+bk$, we have
\begin{align*}
 \#E_{a,b,0}=q+qT^{\frac{3(q-1)}{4}}\left(\frac{3k^2+2ak+b}{3}\right)
{_2}F_1
\left(
\begin{matrix}
T^{\frac{q-1}{12}},&T^{\frac{5(q-1)}{12}}\\
&&\mid-\frac{27(k^3+ak^2+bk)^2}{4(3k^2+2ak+b)^3}\\
&\phi\\
\end{matrix}
\right).
\end{align*}
\end{proof}
\begin{corollary}\label{c4.2}
Let $q=p^n$, $p>0$ an odd prime and $q\equiv1(\mathrm{mod}~12)$. If $a,b,k\in\mathbb{F}_q^*$, $b$ is a square, $3k+a=0$ and
$a\neq\pm2\sqrt{b}$, then
\begin{align*}
&qT^{\frac{3(q-1)}{4}}\left(\frac{3k^2+2ak+b}{3}\right){_2}F_1
\left(
\begin{matrix}
T^{\frac{q-1}{12}},&T^{\frac{5(q-1)}{12}}\\
&&\mid-\frac{27(k^3+ak^2+bk)^2}{4(3k^2+2ak+b)^3}\\
&\phi\\
\end{matrix}
\right)=\\
&-\phi(a-2\sqrt{b})
+\phi(ab-2b\sqrt{b})+q\phi(-a-2\sqrt{b})
{_2}F_1
\left(
\begin{matrix}
\phi,&\phi\\
&&\mid\frac{a-2\sqrt{b}}{a+2\sqrt{b}}\\
&\varepsilon\\
\end{matrix}
\right).
\end{align*}
\end{corollary}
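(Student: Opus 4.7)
The plan is to obtain two expressions for $\#E_{a,b,0}(\mathbb{F}_q)$ and equate them. Theorem \ref{t4.2}, applied with $a' = 3k^2+2ak+b$ and $b' = k^3+ak^2+bk$ (using $3k+a=0$), supplies the first:
\begin{align*}
\#E_{a,b,0}(\mathbb{F}_q) \;=\; q + qT^{\frac{3(q-1)}{4}}\!\left(\frac{a'}{3}\right)\,{}_2F_1\!\left(\begin{matrix}T^{(q-1)/12},&T^{5(q-1)/12}\\&\phi\end{matrix}\,\Big|\,-\frac{27(b')^2}{4(a')^3}\right),
\end{align*}
which is precisely $q$ plus the left-hand side of the corollary.

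For the second expression I will build a birational equivalence with a twisted Edwards curve. Since $\sqrt{b}\in\mathbb{F}_q$ is implicit in the statement, the substitution $x=\sqrt{b}\,u$, $y=\sqrt{b}\,v$ transforms $E_{a,b,0}$ into the Montgomery form $(1/\sqrt{b})\,v^2 = u^3 + (a/\sqrt{b})u^2 + u$, and the standard map $(u,v) \mapsto (X,Y) = (u/v,\,(u-1)/(u+1))$ then produces $C_{\alpha,\beta}$ with $\alpha = a+2\sqrt{b}$ and $\beta = a-2\sqrt{b}$. Note that $\beta/\alpha = (a-2\sqrt{b})/(a+2\sqrt{b})$ matches the ${}_2F_1$ argument on the right-hand side of the corollary. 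Because the two curves share a smooth projective model as elliptic curves, a short tangent-cone calculation at the two singular points $(1{:}0{:}0)$ and $(0{:}1{:}0)$ of $C_{\alpha,\beta}$ at infinity---whose tangent cones $\alpha Z^2-\beta Y^2$ and $Z^2-\beta X^2$ split over $\mathbb{F}_q$ iff $\phi(\alpha\beta)=1$ and $\phi(\beta)=1$, respectively---will yield
\begin{align*}
\#E_{a,b,0}(\mathbb{F}_q) \;=\; \#C_{\alpha,\beta}(\mathbb{F}_q) + 1 + \phi(\beta) + \phi(\alpha\beta).
\end{align*}

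Substituting the formula from Theorem \ref{t4.1} for $\#C_{\alpha,\beta}(\mathbb{F}_q)$ will cause the $\pm\phi(\beta)$ and $\pm\phi(\alpha\beta)$ terms to cancel, so the second expression simplifies to $\#E_{a,b,0}(\mathbb{F}_q) = q + q\phi(-a-2\sqrt{b})\,{}_2F_1(\phi,\phi;\epsilon\mid (a-2\sqrt{b})/(a+2\sqrt{b}))$. Equating this with the expression from Theorem \ref{t4.2} and cancelling $q$ then yields the corollary, since the extra terms $-\phi(a-2\sqrt{b})+\phi(ab-2b\sqrt{b}) = \phi(a-2\sqrt{b})(\phi(b)-1)$ on its right-hand side vanish identically because $\sqrt{b}\in\mathbb{F}_q$ forces $\phi(b)=1$. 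The main technical obstacle will be the node bookkeeping: verifying via the tangent-cone analysis that each singular point of the planar twisted Edwards model contributes exactly $1+\phi(\beta)$ or $1+\phi(\alpha\beta)$ $\mathbb{F}_q$-points on the smooth resolution, to be matched against the single point at infinity on the Weierstrass model of $E_{a,b,0}$.
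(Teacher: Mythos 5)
Your proposal is correct and follows essentially the same route as the paper: both derive the identity by computing $\#E_{a,b,0}(\mathbb{F}_q)$ twice, once from Theorem \ref{t4.2} and once from Theorem \ref{t4.1} via the birational correspondence with $C_{\alpha,\beta}$, $\alpha=a+2\sqrt{b}$, $\beta=a-2\sqrt{b}$ (your Montgomery-form detour composes to exactly the paper's map $(x,y)\mapsto(x/y,(x-\sqrt{b})/(x+\sqrt{b}))$). The only presentational difference is the bookkeeping: the paper directly counts the exceptional affine fibers ($y=0$ and $x=-\sqrt{b}$ on $E_{a,b,0}$; $x=0$, $y=1$ on $C_{\alpha,\beta}$) and obtains $\#E_{a,b,0}=\#C_{\alpha,\beta}+1+\phi(a^2-4b)+\phi(ab-2b\sqrt{b})$, which coincides with your relation $\#E_{a,b,0}=\#C_{\alpha,\beta}+1+\phi(\beta)+\phi(\alpha\beta)$ once one uses $\phi(b)=1$; your observation that $-\phi(a-2\sqrt{b})+\phi(ab-2b\sqrt{b})$ vanishes identically is likewise correct and explains why the paper's stated right-hand side is an unsimplified form of $q\phi(-a-2\sqrt{b})\,{}_2F_1$.
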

\begin{proof}
A change of variables $(x,y)\mapsto\left(\frac{x}{y},\frac{x-\sqrt{b}}{x+\sqrt{b}}\right)$ takes the algebraic curve $E_{a,b,0}:y^2=x^3+ax^2+bx$ 
to birationally equivalent form  $C_{\alpha,\beta}:\alpha x^2+y^2=1+\beta x^2y^2$, where $\alpha=a+2\sqrt{b}$
 and $\beta=a-2\sqrt{b}$.
Now the points on $E_{a,b,0}$ for $y=0$ and $x=-\sqrt{b}$ do not correspond to any points on $C_{\alpha,\beta}$.
For $y=0$,  there are $2+T^{\frac{q-1}{2}}(a^2-4b)$ extra points on $E_{a,b,0}$, and 
for $x=-\sqrt{b}$, there are $1+T^{\frac{q-1}{2}}(ab-2b\sqrt{b})$ extra points on $E_{a,b,0}$.
Similarly under the inverse transformation, a change of variables $(x,y)\mapsto
\left(\frac{\sqrt{b}(1+y)}{1-y},\frac{\sqrt{b}(1+y)}{x(1-y)}\right)$ takes the algebraic curve $C_{\alpha,\beta}$ to birationally equivalent form $E_{a,b,0}$. 
Now the points on $C_{\alpha,\beta}$ for $x=0$ and $y=1$ do not correspond to any points on $E_{a,b,0}$, so
for $x=0$ and $y=1$, there are two extra point $(0,1),(0,-1)$ on $C_{\alpha,\beta}$.
Clearly $\#E_{a,b,0}+2=\#C_{\alpha,\beta}+3+T^{\frac{q-1}{2}}(a^2-4b)+T^{\frac{q-1}{2}}(ab-2b\sqrt{b})$.
From Theorems \ref{t4.1} and \ref{t4.2}, we have
\begin{align*}
&q+qT^{\frac{3(q-1)}{4}}\left(\frac{3k^2+2ak+b}{3}\right){_2}F_1
\left(
\begin{matrix}
T^{\frac{q-1}{12}},&T^{\frac{5(q-1)}{12}}\\
&&\mid-\frac{27(k^3+ak^2+bk)^2}{4(3k^2+2ak+b)^3}\\
&\phi\\
\end{matrix}
\right)+2=\\
&q-1-\phi(a-2\sqrt{b})
-\phi(a^2-4b)+3+\phi(a^2-4b)+\phi(ab-2b\sqrt{b})\\
&+q\phi(-a-2\sqrt{b})
{_2}F_1
\left(
\begin{matrix}
\phi,&\phi\\
&&\mid\frac{a-2\sqrt{b}}{a+2\sqrt{b}}\\
&\varepsilon\\
\end{matrix}
\right).
\end{align*}
By solving the above equation, we have
\begin{align*}
 &qT^{\frac{3(q-1)}{4}}\left(\frac{3k^2+2ak+b}{3}\right){_2}F_1
\left(
\begin{matrix}
T^{\frac{q-1}{12}},&T^{\frac{5(q-1)}{12}}\\
&&\mid-\frac{27(k^3+ak^2+bk)^2}{4(3k^2+2ak+b)^3}\\
&\phi\\
\end{matrix}
\right)=\\
&-\phi(a-2\sqrt{b})+\phi(ab-2b\sqrt{b})+q\phi(-a-2\sqrt{b})
{_2}F_1
\left(
\begin{matrix}
\phi,&\phi\\
&&\mid\frac{a-2\sqrt{b}}{a+2\sqrt{b}}\\
&\varepsilon\\
\end{matrix}
\right).
\end{align*}
\end{proof}
In the next theorem, we obtain some special values of $_2F_1$ hypergeometric function containing characters of order $12$.
\begin{thm}
Let $q=p^n$, $p$ be a prime with $q\equiv1(\mathrm{mod}~12)$ and let $T$ be a fixed generator of $\mathbb{F}_q^*$, then
\begin{enumerate}
 \item
$
{_2}F_1
\left(
\begin{matrix}
T^{\frac{q-1}{12}},&T^{\frac{5(q-1)}{12}}\\
&&\mid\frac{1323}{1331}\\
&\phi\\
\end{matrix}
\right)=T^{\frac{q-1}{4}}\left(\frac{-44}{3}\right)\phi(2)\left[\binom{T^{\frac{q-1}{4}}}{\phi}+\binom{T^{\frac{3(q-1)}{4}}}{\phi}
\right],
$

\item
$
{_2}F_1
\left(
\begin{matrix}
T^{\frac{q-1}{12}},&T^{\frac{5(q-1)}{12}}\\
&&\mid\frac{8}{1331}\\
&\varepsilon\\
\end{matrix}
\right)=T^{\frac{q-1}{3}}(-1)T^{\frac{q-1}{4}}\left(\frac{44}{3}\right)\phi(2)
\left[\binom{T^{\frac{q-1}{4}}}{\phi}+\binom{T^{\frac{3(q-1)}{4}}}{\phi}
\right],
$
\item
$
{_2}F_1
\left(
\begin{matrix}
T^{\frac{q-1}{12}},&T^{\frac{q-1}{12}}\\
&&\mid\frac{-1323}{8}\\
&\phi\\
\end{matrix}
\right)=T^{\frac{q-1}{12}}\left(\frac{8}{1331}\right)T^{\frac{q-1}{4}}\left(-\frac{44}{3}\right)\phi(2)
\left[\binom{T^{\frac{q-1}{4}}}{\phi}+\binom{T^{\frac{3(q-1)}{4}}}{\phi}
\right],
$
\item
$
{_2}F_1
\left(
\begin{matrix}
T^{\frac{q-1}{12}},&T^{\frac{-5(q-1)}{12}}\\
&&\mid\frac{-8}{1323}\\
&\varepsilon\\
\end{matrix}
\right)=T^{\frac{q-1}{3}}(-1)T^{\frac{q-1}{12}}\left(\frac{1323}{1331}\right)T^{\frac{q-1}{4}}\left(\frac{44}{3}\right)\phi(2)
\left[\binom{T^{\frac{q-1}{4}}}{\phi}+\binom{T^{\frac{3(q-1)}{4}}}{\phi}\right].
$
\end{enumerate}
\end{thm}
\begin{proof}
Set $a=12$ and $b=4$ in Corollary \ref{c4.2}, we have
\begin{align*}
qT^{\frac{3(q-1)}{4}}\left(\frac{3k^2+24k+4}{3}\right)
{_2}F_1
\left(
\begin{matrix}
T^{\frac{q-1}{12}},&T^{\frac{5(q-1)}{12}}\\
&&\mid-\frac{27(k^3+12k^2+4k)^2}{4(3k^2+24k+4)^3}\\
&\phi\\
\end{matrix}
\right)\\
=-\phi(8)+\phi(32)+q\phi(-16)
{_2}F_1
\left(
\begin{matrix}
\phi,&\phi\\
&&\mid\frac{1}{2}\\
&\varepsilon\\
\end{matrix}
\right).
\end{align*}
Since in Corollary \ref{c4.2}, $3k+a=0$, so $k$ becomes $-4$, then we have
\begin{align*}
{_2}F_1
\left(
\begin{matrix}
T^{\frac{q-1}{12}},&T^{\frac{5(q-1)}{12}}\\
&&\mid\frac{1323}{1331}\\
&\phi\\
\end{matrix}
\right)
=\phi(-1)T^{\frac{q-1}{4}}\left(\frac{-44}{3}\right)
{_2}F_1
\left(
\begin{matrix}
\phi,&\phi\\
&&\mid\frac{1}{2}\\
&\varepsilon\\
\end{matrix}
\right).
\end{align*}
Using Corollary \ref{c4.1}, the above equation becomes\\
\begin{align*}
{_2}F_1
\left(
\begin{matrix}
T^{\frac{q-1}{12}},&T^{\frac{5(q-1)}{12}}\\
&&\mid\frac{1323}{1331}\\
&\phi\\
\end{matrix} 
\right)
=\phi(2)T^{\frac{q-1}{4}}\left(\frac{-44}{3}\right)
\left[\binom{T^{\frac{q-1}{4}}}{\phi}+
\binom{T^\frac{3(q-1)}{4}}{\phi}\right].
\end{align*}
$(2)$
Putting $x=\frac{1323}{1331}$ $A=T^{\frac{q-1}{12}}, B=T^{\frac{5(q-1)}{12}}$ and $C=\phi$ in Theorem \ref{tg4.4}-$(1)$, we obtain
\begin{align*}
 {_2}F_1
\left(
\begin{matrix}
T^{\frac{q-1}{12}},&T^{\frac{5(q-1)}{12}}\\
&&\mid\frac{8}{1331}\\
&\varepsilon\\
\end{matrix} 
\right)
=T^{\frac{q-1}{12}}(-1)
{_2}F_1
\left(
\begin{matrix}
T^{\frac{q-1}{12}},&T^{\frac{5(q-1)}{12}}\\
&&\mid\frac{1323}{1331}\\
&\phi\\
\end{matrix} 
\right).
\end{align*}
Thus the proof of $(2)$ follows from the proof of $(1)$.\\
$(3)$ For $x=\frac{1323}{1331}$, $A=T^{\frac{q-1}{12}},~B=T^{\frac{5(q-1)}{12}}$ and $C=\phi$, Theorem \ref{tg4.4}-$(2)$
yields
\begin{align*}
 {_2}F_1
\left(
\begin{matrix}
T^{\frac{q-1}{12}},&T^{\frac{q-1}{12}}\\
&&\mid-\frac{1323}{8}\\
&\phi\\
\end{matrix} 
\right)
=T^{\frac{q-1}{12}}\left(\frac{8}{1331}\right)\phi(-1)
{_2}F_1
\left(
\begin{matrix}
T^{\frac{q-1}{12}},&T^{\frac{5(q-1)}{12}}\\
&&\mid\frac{1323}{1331}\\
&\phi\\
\end{matrix} 
\right).
\end{align*}
Since $q\equiv 1\pmod{12}$, therefore $\phi(-1)=1$,
then the proof of $(3)$ follows from the proof of $(1)$.\\
$(4)$ Finally putting $x=\frac{8}{1331}$, $A=T^{\frac{q-1}{12}},~B=T^{\frac{5(q-1)}{12}}$ and $C=\varepsilon$ in
Theorem \ref{tg4.4}-$(2)$, we obtain
\begin{align*}
 {_2}F_1
\left(
\begin{matrix}
T^{\frac{q-1}{12}},&T^{\frac{-5(q-1)}{12}}\\
&&\mid-\frac{8}{1323}\\
&\varepsilon\\
\end{matrix} 
\right)
=T^{\frac{q-1}{12}}\left(\frac{1323}{1331}\right)
{_2}F_1
\left(
\begin{matrix}
T^{\frac{q-1}{12}},&T^{\frac{5(q-1)}{12}}\\
&&\mid\frac{8}{1331}\\
&\varepsilon\\
\end{matrix} 
\right).
\end{align*}
Hence the proof of $(4)$ follows from the proof of $(2)$.
\end{proof}
\begin{center}
 \textbf{Acknowledgment}
\end{center}
We would like to thank Rupam Barman for a careful reading of the initial drafts of this paper and suggesting many modification which 
have improved the exposition.
\bibliographystyle{plain} 
\bibliography{ref}
\end{document}